\newcounter{parenttheorem}%
\newcommand\RedeclareMathOperator{%
  \@ifstar{\def\rmo@s{m}\rmo@redeclare}{\def\rmo@s{o}\rmo@redeclare}%
}
\newcommand\rmo@redeclare[2]{%
  \begingroup \escapechar\m@ne\xdef\@gtempa{{\string#1}}\endgroup
  \expandafter\@ifundefined\@gtempa
     {\@latex@error{\noexpand#1undefined}\@ehc}%
     \relax
  \expandafter\rmo@declmathop\rmo@s{#1}{#2}}
\newcommand\rmo@declmathop[3]{%
  \DeclareRobustCommand{#2}{\qopname\newmcodes@#1{#3}}%
}
\newcommand{\http}[1]{\href{http://#1}{\texttt{\nolinkurl{#1}}}}
\newcommand{\mailto}[1]{\href{mailto:#1}{\texttt{#1}}}
\def\BibTeX{{\rm B\kern-.05em{\sc i\kern-.025em b}\kern-.08em
    T\kern-.1667em\lower.7ex\hbox{E}\kern-.125emX}}
\def\minim{\mathop{\hbox{minimize}}}
\def\maxim{\mathop{\hbox{maximize}}}
\def\minimize#1{\displaystyle\minim_{#1}}
\def\maximize#1{\displaystyle\maxim_{#1}}
\def\st{\mathop{\hbox{subject to}}}
\def\ip#1#2{\langle #1,#2\rangle}
\def\dps\displaystyle
\def\clink#1{\mkern2mu\overline{\mkern-2mu c\mkern2mu}\mkern-2mu\k(#1)}
\newcommand{\Real}{\mathbb{R}}
\newcommand{\Complex}{\mathbb{C}}
\def\abs#1{|#1|}
\def\norm#1{\|#1\|}
\newcommand{\indicator}[2]{\delta({#2}\mid{#1})}
\DeclareMathOperator{\diag}{diag}
\DeclareMathOperator{\Diag}{Diag}
\DeclareMathOperator{\trace}{trace}
\RedeclareMathOperator{\vec}{vec}
\def\half{{\textstyle{\frac{1}{2}}}}
\def\inv{^{-1}}
\def\spose#1{\hbox to 0pt{#1\hss}}
\def\text #1{\hbox{\quad#1\quad}}
\def\textt#1{\hbox{\qquad#1\qquad}}
\def\pthinsp{\mskip  2   mu}    %
\def\F{_{\scriptscriptstyle F}}
\def\L{_{\scriptscriptstyle L}}
\def\k{_k}
\def\km#1{_{k-#1}}
\def\lamhat{\skew{2.8}\widehat \lambda}
\def\lambdahat{\lamhat}
\def\bhat{\skew2\widehat b}
\def\hbar{\skew{4.2}\bar h}
\def\Uhat{\widehat U}
\def\Vhat{\widehat V}
\def\xhat{\skew{2.8}\widehat x}
\def\Xhat{\widehat X}
\def\yhat{\skew2\widehat y}
\def\Zhat{\widehat Z}
\def\xstarkm1{x^*\km1}
\def\ystarkm1{y^*\km1}
\newcommand{\Ascr}{\mathcal{A}}
\newcommand{\Cscr}{\mathcal{C}}
\newcommand{\Hscr}{\mathcal{H}}
\newcommand{\Kscr}{\mathcal{K}}
\newcommand{\Mscr}{\mathcal{M}}
\newcommand{\Oscr}{\mathcal{O}}
\newcommand{\Pscr}{\mathcal{P}}
\newcommand{\Xscr}{\mathcal{X}}
\newcommand{\Yscr}{\mathcal{Y}}
\def\Matlab{\textsc{Matlab}}
\newcommand{\conj}[1]{\overline{#1}}
\newcommand{\conjtran}[1]{\overline{#1}^{\raisebox{-2.5pt}{$\pthinsp\scriptstyle T$}}}
\newcommand{\realpart}{\mathfrak{R}}
\newcommand{\imagpart}{\mathfrak{I}}
\newcommand{\Hermitian}{\Hscr}
\title{LOW-RANK SPECTRAL OPTIMIZATION VIA GAUGE DUALITY%
  \thanks{August 1, 2015; revised August 12, 2015; revised February
    24, 2016; revised March 21, 2016.
}}
\author{ Michael P. Friedlander%
  \thanks{%
    Department of Mathematics, University of California, Davis
    (\mailto{mpf@math.ucdavis.edu}). Research supported by ONR award
    N00014-16-1-2242.} \and Ives Mac\^edo%
  \thanks{%
    Department of Computer Science, University of British Columbia,
    Vancouver, BC, Canada.  (\mailto{ijamj@cs.ubc.ca}). Research
    supported by NSERC Discovery grant 312104 and NSERC Collaborative
    Research and Development grant 375142-08.} }
\date{\today}
\begin{document}

\maketitle
\thispagestyle{plain}
\pagestyle{myheadings}

\begin{abstract}
  Various applications in signal processing and machine learning give
  rise to highly structured spectral optimization problems
  characterized by low-rank solutions. Two important examples that
  motivate this work are optimization problems from phase retrieval
  and from blind deconvolution, which are designed to yield rank-1
  solutions. An algorithm is described that is based on solving a
  certain constrained eigenvalue optimization problem that corresponds
  to the gauge dual which, unlike the more typical Lagrange dual, has
  an especially simple constraint. The dominant cost at each iteration
  is the computation of rightmost eigenpairs of a Hermitian
  operator. A range of numerical examples illustrate the scalability
  of the approach.
\end{abstract}
\begin{keywords}
  convex optimization, gauge duality, semidefinite optimization,
  sparse optimization, low-rank solutions, phase retrieval
\end{keywords}
\begin{AMS}
  90C15, 90C25
\end{AMS}

\section{Introduction} \label{sec:intro}

There are a number of applications in signal processing and machine
learning that give rise to highly structured spectral optimization
problems. We are particularly interested in the class of problems
characterized by having solutions that are very low rank, and by
involving linear operators that are best treated by matrix-free
approaches. This class of problems is sufficiently restrictive that it
allows us to design specialized algorithms that scale well and lend
themselves to practical applications, but it is still sufficiently
rich to include interesting problems. Two examples include the
nuclear-norm minimization for problems such as blind deconvolution
\citep{Ahmed:2014}, and the PhaseLift formulation of the celebrated
phase retrieval problem \citep{candes2012phaselift}. The problems can
be cast generically in the semi-definite programming (SDP) framework,
for which a variety of algorithms are available. However, typical
applications can give rise to enormous optimization problems that
challenge the very best workhorse algorithms.

Denote the set of complex-valued $n\times n$ Hermitian matrices by $\Hscr^n$.
The algorithm that we propose is designed to solve the problems
\begin{subequations} \label{eq:primal-probs}
\begin{alignat}{3}
  \label{eq:sdp-primal}
  &\minimize{X\in\Hscr^n}
  &\quad &\trace X
  &\quad &\st\quad \norm{b-\Ascr X}\le\epsilon,\ X\succeq0,
\\\label{eq:nuc-primal}
  &\minimize{X\in\Complex^{n_1\times n_2}}
  &\quad &\norm{X}_1:=\sum_{i}\sigma_{i}(X)
  &\quad &\st\quad \norm{b-\Ascr X}\le\epsilon,
\end{alignat}
\end{subequations}
where the parameter $\epsilon$ controls the admissible deviations
between the linear model $\Ascr X$ and the vector of observations $b$.
(The particular properties of the vectors $b$ and of the linear
operators $\Ascr$ are detailed in section~\ref{sec:formulations}.)
Our approach for both problems is based on first solving a related
Hermitian eigenvalue optimization problem over a very simple
constraint, and then using that solution to recover a solution of the
original problem. This eigenvalue problem is highly structured, and
because the constraint is easily handled, we are free to apply a
projected first-order method with inexpensive per-iteration costs that
scales well to very large problems.

The key to the approach is to recognize that the
problems~\eqref{eq:primal-probs} are members of the family of gauge
optimization problems, which admit a duality concept different from
the Lagrange duality prevalent in convex optimization. Gauges are
nonnegative, positively homogeneous convex functions that vanish at
the origin. They significantly generalize the familiar notion of a
norm, which is a symmetric gauge function. The class of gauge
optimization problems, as defined by \citeauthor{freund:1987}'s
seminal 1987 work, can be stated simply: find the element of a convex
set that is minimal with respect to a gauge. These conceptually simple
problems appear in a remarkable array of applications, and include an
important cross-section of convex optimization. For example, all of
conic optimization can be phrased within the class of gauge
optimization; see \citet[Example 1.3]{FriedlanderMacedoPong:2014}, and
section~\ref{sec:gauge-duality} below.

Problem~\eqref{eq:sdp-primal} is not explicitly stated as a gauge
problem because the objective is not nonnegative everywhere on its
domain, as required in order for it to be a gauge function. It is,
however, nonnegative on the feasible set, and the problem can easily
be cast in the gauge framework simply by changing the objective
function to
\begin{equation}
  \label{eq:gauge-obj}
  \trace X + \indicator{\cdot\succeq0}{X},
  \text{where}
  \indicator{\cdot\succeq0}{X} =
  \begin{cases}
    0 & \mbox{if $X\succeq0$,}
    \\+\infty & \mbox{otherwise.}
  \end{cases}
\end{equation}
This substitution yields an equivalent problem, and the resulting
convex function is nonnegative and positively homogeneous---and
therefore a gauge function.  More generally, it is evident that any
function of the form $\gamma+\indicator{\Kscr}{\cdot}$ is a gauge, in
which $\gamma$ is a gauge and $\indicator{\Kscr}{\cdot}$ is the
indicator of a convex cone $\Kscr$.

The method that we develop applies to the much broader class of
semidefinite optimization problems with nonnegative objective values,
as described in section~\ref{sec:extensions}.  We pay special
attention to the low-rank spectral problems just mentioned because
they have a special structure that can be exploited both theoretically
and computationally.

\subsection{Notation} \label{sec:notation}

To emphasize the role of the vector of singular values $\sigma(X),$ we
adopt the Schatten $p$-norm notation for the matrix-norms referenced
in this paper, i.e., $\norm{X}_p:=\norm{\sigma(X)}_p$. Thus, the
nuclear, Frobenius, and spectral norms of a matrix $X$ are denoted by
$\norm{X}_1,$ $\norm{X}_2$, and $\norm{X}_\infty,$ respectively.  The
notation for complex-valued quantities, particularly in the SDP
context, is not entirely standard. Here we define some objects we use
frequently.  Define the complex inner product
$\ip{X}{Y}:=\trace XY^*$, where $Y^{*}$ is the conjugate transpose of
a complex matrix $Y$, i.e., $Y^{*}=\conjtran{Y}$.  The set of
$n\times n$ Hermitian matrices is denoted by $\Hermitian^{n}$, and
$X\succeq0$ (resp., $X\succ0)$ indicates that the matrix $X$ is both
Hermitian and positive semidefinite (resp., definite). Let
$\lambda(A)$ be the vector of ordered eigenvalues of
$A\in\Hermitian^{n}$, i.e.,
$\lambda_{1}(A)\ge\lambda_{2}(A)\ge\cdots\ge\lambda_{n}(A).$ (An
analogous ordering is assumed for the vector of singular values.) For
$B\succ0,$ let $\lambda(A,B)$ denote the vector of generalized
eigenvalues of the pencil $(A,B)$, i.e.,
$\lambda(A,B)=\lambda(B^{-\nicefrac12}AB^{-\nicefrac12}).$ \linebreak
Let $\realpart(\cdot)$ and $\imagpart(\cdot)$ denote the real and
imaginary parts of their arguments. The norm dual to
$\norm{\cdot}:\Complex^m\to\Real_{+}$ is defined by
\begin{equation}\label{eq:5}
\norm{x}_{*}:=\sup_{\norm{z}\le1}\,\realpart\ip{z}{x}.
\end{equation}
The positive part of a scalar is denoted by $[\cdot]_{+}=\max\{0,\cdot\}.$

When we make reference to \emph{one-} and \emph{two-dimensional}
signals, our intent is to differentiate between problems that involve
discretized functions of \emph{one} and \emph{two} variables,
respectively, rather than to describe the dimension of the ambient
space. Hence the terms \emph{two-dimensional signals} and
\emph{two-dimensional images} are used interchangeably.

Generally we assume that the problems are feasible, although we
highlight in section~\ref{sec:gauge-duality} how to detect infeasible
problems. We also assume that $0\le\epsilon<\norm{b}$, which ensures
that the origin is not a trivial solution. In practice, the choice of
the norm that defines the feasible set will greatly influence the
computational difficulty of the problem. Our implementation is based
on the 2-norm, which often appears in many practical applications.
Our theoretical developments, however, allow for any norm.

\subsection{Problem formulations} \label{sec:formulations}

Below we describe two applications, in phase retrieval and blind
deconvolution, that motivate our work.  There are other relevant
examples, such as matrix completion \citep{RFP:2010}, but these two
applications require optimization problems that exemplify properties
we exploit in our approach.

\subsubsection{Phase retrieval} \label{ssec:pr}

The phase retrieval problem is concerned with recovery of the phase
information of a signal---e.g., an image---from magnitude-only
measurements.  One important application is X-ray crystallography,
which generates images of the molecular structure of  crystals
\citep{Harrison:93}. Other applications are described by
\cite{candes2012phaselift} and \cite{Waldspurger:2015}. They describe
the following recovery approach, based on convex optimization.

Magnitude-only measurements of the signal $x\in\Complex^n$ can be
described as quadratic measurements of the form
\[
 b_{k} = \abs{\ip{x}{a_k}}^2
\]
for some vectors $a_k$ that encode the waveforms used to illuminate
the signal. These quadratic measurements of $x$ can be understood as
linear measurements
\[
  b\k = \ip{xx^*}{a_k a_k^*}
      = \ip{X}{A_k}
\]
of the lifted signal $X:=xx^*$, where $A_k:=a_ka_k^*$ is the
$k$th lifted rank-1 measurement matrix.

In the matrix space, the trace of the unknown lifted signal $X$ acts
as a surrogate for the rank function. This is analogous to the 1-norm,
which stands as a convex surrogate for counting the number of nonzeros
in a vector. This leads us to an optimization problem of the
form~\eqref{eq:sdp-primal}, where $\Ascr:\Hermitian^{n}\to\Real^{m}$
is defined by $(\Ascr X)_{k}:=\ip{X}{A\k}$.  The parameter $\epsilon$
anticipates noise in the measurements. \citet{candes2012phaselift}
call this the \emph{PhaseLift} formulation. In section~\ref{ssec:plexp} we
give numerical examples for recovering one- and two-dimensional
signals with and without noise.

\subsubsection{Biconvex compressed sensing and blind
  deconvolution} \label{ssec:bcs}

The biconvex compressed sensing problem \citep{Ling:2015} aims to
recover two signals from a number of sesquilinear measurements of the
form
\[
  b_k = \ip{x_1}{a_{1k}}\conj{\ip{x_2}{a_{2k}}},
\]
where $x_{1}\in\Complex^{n_{1}}$ and $x_{2}\in\Complex^{n_{2}}$. In
the context of blind deconvolution, $x_{1}$ and $x_{2}$ correspond to
coefficients of the signals in some bases.  The lifting approached
used in the phase retrieval formulation can again be used, and the
measurements of $x_{1}$ and $x_{2}$ can be understood as coming from
linear measurements
\[
  b\k = \ip{x_{1}x_{2}^{*}}{a_{1k}a_{2k}^{*}} = \ip{X}{A_k}
\]
of the lifted signal $X=x_1x_2^*$, where $A_k:=a_{1k}a_{2k}^*$ is the
lifted asymmetric rank-1 measurement matrix. \citet{Ahmed:2014} study
conditions on the structure and the number of measurements that
guarantee that the original vectors (up to a phase) may be recovered
by minimizing the sum of singular values of $X$ subject to the linear
measurements. This leads to an optimization problem of the
form~\eqref{eq:nuc-primal}, where
$\Ascr:\Complex^{n_1\times n_2}\to\Complex^m$ is defined by
$(\Ascr X)_k:=\ip{X}{A_k}.$

In section~\ref{ssec:bcsexp}, we describe a two-dimensional blind
deconvolution application from motion deblurring, and there we provide
further details on the structure of the measurement operators $A\k$,
and report on numerical experiments.

\subsection{Reduction to Hermitian SDP} \label{sec:gform}

It is convenient, for both our theoretical and algorithmic development,
to embed the nuclear-norm minimization problem~\eqref{eq:nuc-primal}
within the symmetric SDP~\eqref{eq:sdp-primal}. The resulting theory
is no less general, and it permits us to solve both problems with what
is essentially a single software implementation.  The reduction to the
Hermitian trace-minimization problem~\eqref{eq:sdp-primal} takes the
form
\begin{equation} \label{eq:11}
\begin{aligned}
  &\minimize{\substack{U\in\Hscr^{n_1},V\in\Hscr^{n_2}
             \\\quad X\in\Complex^{n_1\times n_2}\\r_{1},r_{2}\in\Real^{m}}}
  & &\quad \frac12\left\langle\begin{pmatrix}I&0\\0&I\end{pmatrix},
    \begin{pmatrix}U&X\\X^*&V\end{pmatrix}\right\rangle
  \\&\;\;\st & &\quad
       \begin{aligned}[t]
         \frac{1}{2}\left\langle
           \begin{pmatrix}0&A_k\\\phantom{-}A_k^*&0\end{pmatrix},
           \begin{pmatrix}U&X\\X^*&V\end{pmatrix}
         \right\rangle
         +r_{1k}
         &= \realpart b_k,
       \\\frac{i}{2}\left\langle
         \begin{pmatrix}0&A_k\\-A_k^*&0\end{pmatrix},
         \begin{pmatrix}U&X\\X^*&V\end{pmatrix}
       \right\rangle
       +r_{2k}
       &= \imagpart b_k,
      \\\norm{r_{1}+i r_{2}}\le\epsilon,
      \quad
      \begin{pmatrix}U&X\\X^*&V\end{pmatrix}&\succeq0,\ \ k=1,\ldots,m.
    \end{aligned}
\end{aligned}
\end{equation}

The residual variables $r_{1},r_{2}\in\Real^m$ merely serve to allow
the compact presentation above, as they can be eliminated using the
equality constraints. The additional variables $U$ and $V,$ at the
minimizer, correspond to $(XX^*)^{\nicefrac12}$ and
$(X^*X)^{\nicefrac12},$ respectively; the variable $X$ retains its
original meaning.  This reduction is based on a well-known
reformulation of the nuclear norm as the optimal value of an SDP;
see~\citet[Lemma 2]{Fazel:2002} or~\citet[Proof of Proposition
2.1]{RFP:2010}.

Although this reduction is convenient for our presentation, it is not
strictly necessary, as will be clear from the results
in section~\ref{sec:extensions}.  In fact, the resulting increase in problem
size might affect a solver's performance, as would likely be
noticeable if dense solvers are employed.  Our focus, however, is on
large problems that require matrix-free operators and exhibit low-rank
solutions. Throughout this paper, we focus entirely on the SDP
formulation~\eqref{eq:sdp-primal} without loss of generality.

\subsection{Approach} \label{ssec:approach}

Our strategy for these low-rank spectral optimization problems is
based on solving the constrained eigenvalue optimization problem
\begin{equation}
  \label{eq:gaugedual}
  \minimize{y\in\Real^m} \quad \lambda_1(\Ascr^*y)
  \quad\st\quad
    \ip b y - \epsilon\norm{y}_*\ge1
\end{equation}
that results from applying gauge duality
\citep{FriedlanderMacedoPong:2014,freund:1987} to a suitable
reformulation of~\eqref{eq:sdp-primal}. This is outlined in
section~\ref{sec:gauge-duality}.  The dimension of the variable $y$ in the
eigenvalue optimization problem corresponds to the number of
measurements. In the context of phase retrieval and blind
deconvolution, \cite{Candes:2014:SQE:2673201.2673259} and
\cite{Ahmed:2014} show that the number of measurements needed to
recover with high probability the underlying signals is within a
logarithmic factor of the signal length.  The crucial implication is
that the dimension of the dual problem grows slowly as compared to the
dimension of the primal problem, which grows as the square of the
signal length.

In our implementation, we apply a simple first-order projected
subgradient method to solve the eigenvalue problem. The dominant cost
at each iteration of our algorithm is the computation of rightmost
eigenpairs of the $n\times n$ Hermitian linear operator $\Ascr^{*}y$,
which are used to construct descent directions
for~\eqref{eq:gaugedual}. The structure of the measurement operators
allows us to use Krylov-based eigensolvers, such as ARPACK
\citep{lehoucq1998arpack}, for obtaining these leading eigenpairs.
Primal solution estimates $X$ are recovered via a relatively small
constrained least-squares problem, described in
section~\ref{sec:implementation}.

An analogous approach based on the classical Lagrangian duality 
also leads to a dual optimization problem in the same space as our dual
eigenvalue problem:
\begin{equation}
  \label{eq:sdpld}
  \maximize{y\in\Real^m}\quad \ip b y - \epsilon\norm{y}_*
  \quad\st\quad
  \Ascr^*y\preceq I.
\end{equation}
Note that the Lagrange dual possesses a rather simple objective and a
difficult linear matrix inequality of order $n$ as a constraint. Precisely the
reverse situation holds for the gauge dual~\eqref{eq:gaugedual}, which
has a relatively simple constraint. %

It is well known that SDPs with a constant-trace property---i.e.,
$\Ascr X=b$ implies $\trace(X)$ is constant---have Lagrange dual
problems that can be formulated as unconstrained eigenvalue
problems. This approach is used by \cite{HelmbergRendl:2000} to
develop a spectral bundle method. The applications that we consider,
however, do not necessarily have this property.

\subsection{Reproducible research} The data files and \Matlab\ scripts
used to generate the numerical results presented in
section~\ref{sec:experiments} can be obtained at the following URL:
\begin{center}
  \smallskip
  \url{http://www.cs.ubc.ca/~mpf/low-rank-opt}
\end{center}

\subsection{Related work} Other researchers have recognized the need
for algorithms with low per-iteration costs that scale well for
large-scale, low-rank spectral optimization problems. Notable efforts
include \cite{hazan2008sparse}, \cite{laue2012hybrid}, and
\cite{2015arXiv151102204F}, who advocate variations of the Frank-Wolfe
(FW) method to solve some version of the problem
\begin{equation} \label{eq:frank-wolfe-prob}
 \minimize{X} \quad f(X) \quad\st\quad \trace(X)\le\tau,\ X\succeq0,
\end{equation}
where $f$ is a differentiable function. For example, the choice
$f(X)=\half\norm{\Ascr X-b}_2^2$ yields a problem related
to~\eqref{eq:sdp-primal}. The asymmetric version of the problem with
$\norm{X}_1\le\tau$ is easily accommodated by simply replacing the
above constraints. For simplicity, here we focus on the symmetric
case, though our approach applies equally to the asymmetric case.  The
main benefit of using FW for this problem is that each iteration
requires only a rightmost eigenvalue of the gradient $\nabla f(X)$,
and therefore has the same per-iteration cost of the method that we
consider, which requires a rightmost eigenvalue of the same-sized
matrix $\Ascr^* y$. The same Krylov-based eigensolvers apply in both
cases.

There are at least two issues that need to be addressed when comparing
the FW algorithm to the approach we take here. First, as
\cite{2015arXiv151102204F} make clear, even in cases where low-rank
solutions are expected, it is not possible to anticipate the rank of
early iterates $X_k$ generated by the FW method. In particular, they
observe that the rank of $X_k$ quickly increases during early
iterations, and only slowly starts to decrease as the solution is
approached. This motivates their development of algorithmic devices
that attenuate rank growth in intermediate iterates.  Any
implementation, however, must be prepared to increase storage for the
factors of $X_k$ during intermediate iterates. In contrast, a
subgradient method applied to the gauge dual problem can be
implemented with constant storage. Second, although in principle there
exists a parameter $\tau$ that causes the optimization
problems~\eqref{eq:frank-wolfe-prob} and~\eqref{eq:sdp-primal} to
share the same solution, this parameter is not generally known in
advance. One way around this is to solve a sequence of
problems~\eqref{eq:frank-wolfe-prob} for varying parameters $\tau\k$
using, for example, a level-set procedure described by
\cite*{2016arXiv160201506A}.

As an alternative to applying the FW algorithm
to~\eqref{eq:frank-wolfe-prob}, we might instead consider applying a
variation of the FW method directly to the gauge dual
problem~\eqref{eq:gaugedual}. Because the gauge dual objective is not
differentiable and the feasible set is not compact if $\epsilon=0$,
some modification to the standard FW method is required.
\cite{argyriou2014hybrid}, \cite{bach2015duality}, and
\cite{nesterov2015complexity} propose variations of FW that involve
smoothing the objective. These smoothing approaches are typically
based on infimal convolution with a smooth kernel, which may lead to a
function whose gradient is expensive to compute. For example, the
``soft-max'' smooth approximation of $\lambda_1(\cdot)$ is the
function
$\mu\log\sum_{i=1,\ldots,n}\exp(\lambda_i(\cdot)/\mu)$. Forming the
gradient of this smooth function requires computing all
eigenvalues of an $n$-by-$n$ Hermitian matrix.

\citet{laue2012hybrid} proposes a hybrid algorithm that interleaves a
nonconvex subproblem within the FW iterations. If the local minimum of
the nonconvex subproblem improves the objective value, it is used to
replace the current FW iterate. This approach is similar in spirit to
the primal-dual refinement that we describe in
section~\ref{sec:pr-du-refinement}, but because Laue's method is entirely
primal, it has the benefit of not requiring a procedure to feed the
improved primal sequence back to the dual sequence.
 
\section{Spectral gauge optimization and duality} \label{sec:gauge-duality}

The derivation of the eigenvalue optimization
problem~\eqref{eq:gaugedual} as a dual to~\eqref{eq:sdp-primal}
follows from a more general theory of duality for gauge
optimization. Here we provide some minimal background for our
derivations related to spectral optimization; see \cite{freund:1987}
and \cite{FriedlanderMacedoPong:2014} for fuller descriptions. We
begin with a general description of the problem class.

Let $\kappa:\Xscr\mapsto\Real\cup\{+\infty\}$ and
$\rho:\Yscr\mapsto\Real\cup\{+\infty\}$ be gauge functions, where
$A:\Xscr\mapsto\Yscr$ is a linear operator that maps between the
finite-dimensional real inner-product spaces $\Xscr$ and $\Yscr$. The
polar
\[
 f^{\circ}(y) := \inf\set{\mu>0 | \ip{x}{y} \le \mu f(x)\ \forall x}
\]
of a gauge $f$ plays a key role in the duality of gauge problems. The
 problems
\begin{subequations} \label{eq:12}
\begin{alignat}{4}
  \label{eq:gp}
  &\minimize{x\in\Xscr} &\quad &\kappa(x)
  & &\quad\st\quad &  \rho(b-Ax)&\le\epsilon,
\\\label{eq:gd}
  &\minimize{y\in\Yscr} &\quad &\kappa^\circ(A^*y)
  & &\quad\st\quad &  \ip b y - \epsilon\rho^\circ(y)&\ge1,
\end{alignat}
\end{subequations}
are dual to each other in the following sense: all primal-dual
feasible pairs $(x,y)$ satisfy the weak-duality relationship
\begin{equation}
  \label{eq:1}
  1\le \kappa(x)\,\kappa^{\circ}(A^{*}y).
\end{equation}
Moreover, a primal-dual feasible pair is optimal if this holds with
equality. This strong-duality relationship provides a certificate of
optimality.

The SDP problem \eqref{eq:sdp-primal} can be cast into the mold of the
canonical gauge formulation~\eqref{eq:gp} by using the redefined
objective~\eqref{eq:gauge-obj} and making the identifications
\begin{equation*}
  \kappa(X)=\trace X+\delta(X\,|\,\cdot\succeq0)
  \textt{and}
  \rho(r) = \norm{r}.
\end{equation*}
We use the polar calculus described by \citet[Section
7.2.1]{FriedlanderMacedoPong:2014} together with the definition of the dual norm
to obtain the correponding polar functions:
\begin{equation*}
  \kappa^\circ(Y)=[\lambda_1(Y)]_+
  \textt{and}
  \rho^{\circ}(y)=\norm{y}_{*}.
\end{equation*}
It then follows from~\eqref{eq:12} that the following are a dual gauge pair:
\begin{subequations}\label{eq:13}
\begin{alignat}{3}
  \label{eq:sdpgp}
  &\minimize{X\in\Hscr^n} \quad \trace X + \delta(X\,|\,\cdot\succeq0)
  &\quad&\st&\quad
  \norm{b-\Ascr X}&\le\epsilon,
  \\
  \label{eq:3}
  &\minimize{y\in\Real^m}\quad\qquad [\lambda_1(\Ascr^*y)]_{+}
  &\quad&\st&\quad
  \ip b y - \epsilon\norm{y}_*&\ge1.
\end{alignat}
\end{subequations}
The derivation of gauge dual problems relies on the polarity
operation applied to gauges. When applied to a norm, for example, the
polar is simply the dual norm. In contrast, Lagrange duality is
intimately tied to conjugacy, which is what gives rise to the dual
problem \eqref{eq:sdpld}. Of course, the two operations are closely
related. For any guage function $\kappa$, for example,
$\kappa^*(y)=\delta_{\kappa^\circ(\cdot)\le1}(y)$. These relationships
are described in detail by \citet[Section~15]{Roc70} and
\citet[Section 2.3]{FriedlanderMacedoPong:2014}.

We can simplify the dual objective and safely eliminate the
positive-part operator: because $\kappa(X)$ is necessarily strictly
positive for all nonzero $X$, and is additionally finite over the
feasible set of the original problem~\eqref{eq:sdp-primal}, it follows
from~\eqref{eq:1} that $\kappa^{\circ}(\Ascr^{*}y)$ is positive for
all dual feasible points. In other words,
\begin{equation}\label{eq:2}
 0 < [\lambda_{1}(\Ascr^{*}y)]_{+} = \lambda_{1}(\Ascr^{*}y)
\end{equation}
for all dual feasible points $y$. Hence we obtain the equivalent dual
problem~\eqref{eq:gaugedual}.

In practice, we need to be prepared to detect whether the primal
problem~\eqref{eq:sdpgp} is infeasible. The failure of
condition~\eqref{eq:2} in fact furnishes a certificate of
infeasibility for~\eqref{eq:sdp-primal}: if $\lambda(\Ascr^{*}y)=0$
for some dual-feasible vector $y$, it follows from~\eqref{eq:1} that
$\kappa(X)$ is necessarily infinite over the feasible set
of~\eqref{eq:sdpgp}---i.e., $X\not\succeq0$ for all $X$ feasible
for~\eqref{eq:sdpgp}. Thus,~\eqref{eq:sdp-primal} is infeasible.

\section{Derivation of the approach} \label{sec:gauge-sdp}

There are two key theoretical pieces needed for our approach. The
first is the derivation of the eigenvalue optimization
problem~\eqref{eq:gaugedual}, as shown in section~\ref{sec:gauge-duality}.
The second piece is the derivation of a subproblem that allows recovery
of a primal solution $X$ from a solution of the eigenvalue
problem~\eqref{eq:gaugedual}.

\subsection{Recovering a primal solution} \label{ssec:sd}

Our derivation of a subproblem for primal recovery proceeds in two
stages. The first stage develops necessary and sufficient optimality
conditions for the primal-dual gauge pair~\eqref{eq:sdpgp}
and~\eqref{eq:3}. The second stage uses these to derive a subproblem
that can be used to recover a primal solution from a dual solution.

\subsubsection{Strong duality and optimality conditions}

The weak duality condition~\eqref{eq:1} holds for all primal-feasible
pairs $(X, y)$. The following result asserts that if the pair is
optimal, then that inequality must necessarily hold tightly.

\begin{proposition}[Strong duality]
  \label{prop:strong-duality}
  If~\eqref{eq:sdp-primal} is feasible and $0\le\epsilon<\norm{b},$
  then
  \begin{equation}\label{eq:4}
    \left[
      \min_{
        \substack{X\in\Hscr^n\\\norm{b-\Ascr X}\leq\epsilon}}
      \trace X+\delta(X\,|\,\cdot\succeq0)\rule{0pt}{25pt}
    \right]
    \cdot
    \left[
      \inf_{\substack{y\in\Real^m\\\ip b y -
          \epsilon\norm{y}_*\geq1}}[\lambda_1(\Ascr^*y)]_+
    \right]
    =1.
  \end{equation}
\end{proposition}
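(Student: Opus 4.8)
The plan is to reduce the claimed reciprocal‑product identity~\eqref{eq:4} to two ingredients: strong \emph{Lagrange} duality for the semidefinite program~\eqref{eq:sdp-primal}, and an elementary reciprocity between the Lagrange dual~\eqref{eq:sdpld} and the gauge dual~\eqref{eq:gaugedual} that comes for free from positive homogeneity. Write $p^*$ for the optimal value of the primal problem in the first bracket of~\eqref{eq:4}, and abbreviate the two positively homogeneous functions $\phi(y):=\ip b y-\epsilon\norm{y}_*$ and $g(y):=\lambda_1(\Ascr^* y)$. Since $\Ascr^* y$ is Hermitian, the constraint $\Ascr^* y\preceq I$ is identical to $g(y)\le1$, so~\eqref{eq:sdpld} computes $L^*:=\sup\{\phi(y) : g(y)\le1\}$, while~\eqref{eq:gaugedual} computes $d^*:=\inf\{g(y) : \phi(y)\ge1\}$, the second bracket of~\eqref{eq:4}. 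The goal is therefore to prove $p^*\,d^*=1$.

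First I would establish $p^*=L^*$ together with attainment of the primal minimum. The natural constraint qualification is Slater's condition, but note that when $\epsilon=0$ the feasible set $\{X\succeq0 : \norm{b-\Ascr X}\le\epsilon\}$ has no strictly feasible point for the norm constraint, so I would instead invoke conic strong duality in the form that requires strict feasibility of the \emph{dual}. The point $y=0$ gives $\Ascr^*y=0\prec I$, a strictly feasible point for~\eqref{eq:sdpld}, which guarantees both a zero duality gap, $p^*=L^*$, and attainment of the primal infimum—uniformly across the admissible range $0\le\epsilon<\norm{b}$. The hypothesis $\epsilon<\norm{b}$ then excludes $X=0$ from the feasible set, so the attained minimizer is nonzero and $p^*>0$, while feasibility of~\eqref{eq:sdp-primal} gives $p^*<\infty$. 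Hence $0<L^*=p^*<\infty$.

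Next I would prove the purely homogeneous reciprocity $d^*=1/L^*$. For $d^*\le1/L^*$, take near‑optimizers $y$ of the Lagrange dual with $g(y)\le1$ and $\phi(y)$ close to $L^*>0$, and rescale to $\tilde y:=y/\phi(y)$; positive homogeneity gives $\phi(\tilde y)=1$, so $\tilde y$ is gauge‑dual feasible, and $g(\tilde y)=g(y)/\phi(y)\le1/\phi(y)$, which tends to $1/L^*$. For the reverse inequality, any gauge‑dual‑feasible $y$ has $\phi(y)\ge1$ and, since $L^*<\infty$ forces $g(y)>0$, rescaling to $y/g(y)$ yields a Lagrange‑dual‑feasible point; this gives $\phi(y)\le L^*g(y)$ and hence $g(y)\ge1/L^*$. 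Combining the two inequalities gives $d^*=1/L^*$, and therefore $p^*\,d^*=L^*\cdot(1/L^*)=1$, which is exactly~\eqref{eq:4}; this last step is a concrete incarnation of the weak‑duality bound~\eqref{eq:1}.

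The main obstacle is the first step in the degenerate regime $\epsilon=0$, where the primal has no Slater point and a careless appeal to strong duality could leave a gap; the resolution is to run the constraint qualification through the Lagrange dual, whose strict feasibility at $y=0$ is immediate and independent of $\epsilon$. Everything else—the reciprocity and the positivity of the optimal values—follows from positive homogeneity and the single inequality $\epsilon<\norm{b}$. Alternatively, one could obtain~\eqref{eq:4} by specializing the general strong gauge‑duality theorem of~\citet{FriedlanderMacedoPong:2014} and verifying that the identifications of $\kappa$ and $\rho$ from section~\ref{sec:gauge-duality} meet its hypotheses, but the Lagrange‑plus‑homogeneity route above is self‑contained.
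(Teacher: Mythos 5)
Your proposal is correct and follows essentially the same route as the paper's proof: both invoke strong Lagrange duality through the strictly feasible dual point $y=0$ (giving zero duality gap and primal attainment, hence $0<p^*=L^*<\infty$), and both convert Lagrange-dual near-optimizers into gauge-dual feasible points by positive-homogeneous rescaling to squeeze the gauge-dual value to $1/p^*$. The only cosmetic difference is the lower bound $d^*\ge 1/L^*$: the paper reads it off from the weak gauge-duality inequality~\eqref{eq:1}, whereas you re-derive it by rescaling gauge-dual feasible points into Lagrange-dual feasible ones (after noting $\lambda_1(\Ascr^*y)>0$ there), which, as you yourself observe, is just~\eqref{eq:1} in disguise.
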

\begin{proof}
  We proceed by reasoning about the Lagrangian-dual
  pair~\eqref{eq:sdp-primal} and~\eqref{eq:sdpld}. We then
  translate these results to the corresponding gauge-dual
  pair~\eqref{eq:sdpgp} and~\eqref{eq:3}.

  The primal problem~\eqref{eq:sdp-primal} is feasible by assumption.
  Because its Lagrange dual problem~\eqref{eq:sdpld} admits strictly
  feasible points (e.g., $y=0$), it follows from \citet[Theorems~28.2
  and~28.4]{Roc70} that the primal problem attains its positive
  minimum value and that there is zero duality gap between the
  Lagrange-dual pair.

  Moreover, because the primal problem \eqref{eq:sdp-primal} attains
  its positive minimum value for some $\Xhat$, and there is zero
  duality gap, there exists a sequence $\{y_j\}$ such that
  $[\lambda_1(\Ascr^*y_j)]_+\leq1$ and
  $\ip{y_j}{b}-\epsilon\norm{y_j}_*\nearrow\trace\Xhat.$ Because
  $\trace\Xhat>0$, we can take a subsequence $\{y_{j_k}\}$ for which
  $\ip{y_{j_k}}{b}-\epsilon\norm{y_{j_k}}_*$ is uniformly bounded
  above zero. Define the sequence $\{\yhat_k\}$ by
  $\yhat_k:=y_{j_k}(\ip{y_{j_k}}{b}-\epsilon\norm{y_{j_k}}_*)^{-1}$.
  Then $\ip{\yhat_k}{b}-\epsilon\norm{\yhat_k}_*=1$ for all $k$,
  which is a feasible sequence for the gauge dual
  problem~\eqref{eq:3}. Weak gauge duality~\eqref{eq:1} and the
  definition of $\yhat_k$ then implies that
  \[
    (\trace\Xhat)^{-1}\leq[\lambda_1(\Ascr^*\yhat_k)]_+\leq(\ip{y_{j_k}}{b}-\epsilon\norm{y_{j_k}}_*)^{-1}\searrow(\trace\Xhat)^{-1}.
  \]
  Multiply the series of inequalities by $\trace\Xhat$ to
  obtain~\eqref{eq:4}.
\end{proof}

Note the lack of symmetry in the statement of
Proposition~\ref{prop:strong-duality}: the primal problem is stated
with a ``min'', but the dual problem is stated with an ``inf''. This
is because the dual Slater condition---i.e., strict feasibility of the
corresponding Lagrange-dual problem~\eqref{eq:sdpld}---allows us to
assert that a primal optimal solution necessarily exists.  However, we
cannot assert in general that a dual optimal solution exists because
the corresponding primal feasible set does not necessarily satisfy the
Slater condition.

Although in this work we do not attempt to delineate conditions under
which dual attainment holds, a practical case in which it always does
is when the primal objective is a norm and the measurement operator is
surjective. In that case, the dual gauge objective
$\norm{\Ascr^*\cdot}_*$ defines a norm in $\Real^m$, which has
compact level sets. Hence a dual solution always exists.  We comment
further on this theoretical question in section~\ref{sec:conclusions}.

The following result characterizes gauge primal-dual optimal pairs. It
relies on von Neumann's trace inequality: for Hermitian matrices $A$
and $B$,
\[
  \ip{A}{B}\le\ip{\lambda(A)}{\lambda(B)},
\]
and equality holds if and only if $A$ and $B$ admit a simultaneous
ordered eigendecomposition, i.e., $A = U\Diag[\lambda(A)]U^{*}$ and
$B=U\Diag[\lambda(B)]U^{*}$ for some unitary matrix~$U$;
see~\cite{lewis1996convex}.

\begin{proposition}[Optimality conditions]
  \label{prop:optimality} If~\eqref{eq:sdp-primal} is feasible and
  $0\leq\epsilon<\norm{b},$ then $(X,y)\in\Hscr^n\times\Real^m$ is
  primal-dual optimal for the gauge dual pair \eqref{eq:sdpgp} and
  \eqref{eq:3} if and only if the following conditions hold:
  \begin{enumerate}
    \item\label{prop:sd:pf} $X\succeq0$ and $\norm{b-\Ascr X}=\epsilon;$
    \item\label{prop:sd:df} $\ip{y}{b}-\epsilon\norm{y}_*=1;$
    \item\label{prop:sd:cs} $\ip{y}{b-\Ascr X}=\norm{y}_*\norm{b-\Ascr X};$
    \item\label{prop:sd:cp} $\lambda_i(X)\cdot(\lambda_1(\Ascr^*y)-\lambda_i(\Ascr^*y))=0$, $i=1,\ldots,n;$
    \item\label{prop:sd:vn} $X$ and $\Ascr^*y$
      admit a simultaneous ordered eigendecomposition.
  \end{enumerate}
\end{proposition}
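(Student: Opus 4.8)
The plan is to prove the equivalence by establishing that the five conditions jointly encode precisely what it means for the weak-duality inequality~\eqref{eq:1} to hold with equality, which Proposition~\ref{prop:strong-duality} tells us characterizes optimality. I would organize the argument around the chain of inequalities that underlies~\eqref{eq:1}. Specifically, for any primal-dual feasible pair $(X,y)$, one can write
\[
  1 \le \ip{y}{b-\Ascr X} + \ip{y}{\Ascr X}
      \le \norm{y}_*\norm{b-\Ascr X} + \ip{\Ascr^*y}{X}
      \le \epsilon\norm{y}_* + \lambda_1(\Ascr^*y)\trace X,
\]
where the first step uses dual feasibility $\ip{y}{b}-\epsilon\norm{y}_*\ge1$ together with the splitting $\ip{y}{b}=\ip{y}{b-\Ascr X}+\ip{\Ascr^*y}{X}$, the second applies the generalized Cauchy--Schwarz/dual-norm inequality and the adjoint identity, and the third applies von Neumann's trace inequality together with $X\succeq0$ and the bound $\ip{\lambda(\Ascr^*y)}{\lambda(X)}\le\lambda_1(\Ascr^*y)\sum_i\lambda_i(X)=\lambda_1(\Ascr^*y)\trace X$. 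This last chain reassembles to $1\le\lambda_1(\Ascr^*y)\trace X = \kappa(X)\kappa^\circ(\Ascr^*y)$, recovering weak duality.

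The heart of the proof is then to observe that the pair is optimal if and only if \emph{every} inequality in this chain is tight, and to match each tightness condition to one of the five listed conditions. First I would note that optimality is equivalent to equality in~\eqref{eq:1} by Proposition~\ref{prop:strong-duality} (using feasibility and $0\le\epsilon<\norm{b}$, so the primal minimum is attained and positive). Then, reading the chain from the outside in: equality forces dual feasibility to hold as an equation, giving condition~\ref{prop:sd:df}; the dual-norm step becomes an equality exactly when $\ip{y}{b-\Ascr X}=\norm{y}_*\norm{b-\Ascr X}$, which is condition~\ref{prop:sd:cs}; combined with primal feasibility $\norm{b-\Ascr X}\le\epsilon$ and the sign of $\norm{y}_*$, tightness forces $\norm{b-\Ascr X}=\epsilon$ together with $X\succeq0$, which is condition~\ref{prop:sd:pf}. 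The von Neumann step splits into two separate equalities: equality in $\ip{\Ascr^*y}{X}\le\ip{\lambda(\Ascr^*y)}{\lambda(X)}$ is, by the cited characterization, precisely the simultaneous ordered eigendecomposition of condition~\ref{prop:sd:vn}; and equality in $\ip{\lambda(\Ascr^*y)}{\lambda(X)}\le\lambda_1(\Ascr^*y)\trace X$ holds exactly when $\lambda_i(X)(\lambda_1(\Ascr^*y)-\lambda_i(\Ascr^*y))=0$ for all $i$, which is condition~\ref{prop:sd:cp}.

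The main obstacle, and the point requiring the most care, is the bookkeeping in this final von Neumann step: I must be sure that conditions~\ref{prop:sd:cp} and~\ref{prop:sd:vn} are genuinely independent and together equivalent to equality in the trace-inequality portion of the chain, rather than redundant. The subtlety is that $\lambda_1(\Ascr^*y)>0$ on the dual-feasible set (by~\eqref{eq:2}), so the complementarity in~\ref{prop:sd:cp} says that $X$ is supported only on the top eigenspace of $\Ascr^*y$; this is exactly what is needed for $\sum_i\lambda_i(\Ascr^*y)\lambda_i(X)=\lambda_1(\Ascr^*y)\trace X$, while the separate simultaneous-diagonalization clause~\ref{prop:sd:vn} is what upgrades $\ip{\Ascr^*y}{X}$ to $\ip{\lambda(\Ascr^*y)}{\lambda(X)}$. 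For the converse direction I would simply verify that assuming all five conditions makes each inequality in the chain an equality, so that $\kappa(X)\kappa^\circ(\Ascr^*y)=1$ and both points are feasible, whence the pair is optimal by the equality case of weak duality noted after~\eqref{eq:1}. The remaining verifications (nonnegativity of the relevant quantities, the sign conventions ensuring $\norm{b-\Ascr X}=\epsilon$ rather than a spurious solution at $\norm{y}_*=0$) are routine given $0\le\epsilon<\norm{b}$ and~\eqref{eq:2}.
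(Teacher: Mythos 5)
Your proposal is correct and takes essentially the same route as the paper's proof: both invoke Proposition~\ref{prop:strong-duality} to reduce optimality to equality in the weak-duality product, expand that product into the identical chain of inequalities (dual feasibility, Cauchy--Schwarz, primal feasibility, the bound $\ip{\lambda(\Ascr^*y)}{\lambda(X)}\le\lambda_1(\Ascr^*y)\trace X$, and von Neumann's inequality), match tightness of each link to conditions 1--5, and obtain sufficiency by reversing the chain. The only blemish is notational: as displayed, your chain runs from $1$ up to $\epsilon\norm{y}_*+\lambda_1(\Ascr^*y)\trace X$, so it reassembles to weak duality $1\le\lambda_1(\Ascr^*y)\trace X$ only if the left end carries the dual-feasibility term as well (i.e., $1+\epsilon\norm{y}_*\le\ip{y}{b}$), which is what your ``outside-in'' reading in fact uses.
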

\begin{proof}
  By strong duality (Proposition~\ref{prop:strong-duality}), the pair
  $(X,y)\in\Hscr^m\times\Real^m$ is primal-dual optimal if and only if
  they are primal-dual feasible and the product of their corresponding
  objective values is equal to one. In this case,
  \begin{align*}
    1&=[\trace X+\delta(X\,|\,\cdot\succeq0)]
       \cdot[\lambda_1(\Ascr^*y)]_+\tag{strong duality}\\
     &=\ip{e}{\lambda(X)}\cdot\lambda_1(\Ascr^*y)\\
     &=\ip{\lambda_1(\Ascr^*y)\cdot e}{\lambda(X)}\\
     &\ge\ip{\lambda(\Ascr^*y)}{\lambda(X)}\tag{$\lambda_1(\Ascr^*y)\geq\lambda_i(\Ascr^*y)$ and $X\succeq0$}\\
     &\ge\ip{\Ascr^*y}{X}\tag{von Neumann's trace inequality}\\
     &=\ip{y}{\Ascr X}\\
     &=\ip{y}{b}-\ip{y}{b-\Ascr X}\\
     &\ge\ip{y}{b}-\norm{y}_*\norm{b-\Ascr X}
       \tag{Cauchy-Schwartz inequality}\\
     &\ge\ip{y}{b}-\epsilon\norm{y}_*
       \tag{primal feasibility}\\
     &\ge1.\tag{dual feasibility}
  \end{align*}
  Thus all of the above inequalities hold with equality. This proves
  conditions 1--4. Condition 5 follows from again invoking von
  Neumann's trace inequality and noting its implication that $X$ and
  $\Ascr^{*}y$ share a simultaneous ordered eigenvalue decomposition.
  Sufficiency of those conditions can be verified by simply following
  the reverse chain of reasoning and again noticing that the
  inequalities can be replaced by equalities.
\end{proof}

\subsection{Primal recovery subproblem} \label{ssec:pfd}

The optimality conditions stated in Proposition~\ref{prop:optimality}
furnish the means for deriving a subproblem that can be used to
recover a primal solution from a dual solution. The next result
establishes an explicit relationship between primal solutions $X$ and
$\Ascr^{*}y$ for an arbitrary optimal dual solution $y$.

\begin{corollary}\label{corl:pfd}
  Suppose that the conditions of Proposition~\ref{prop:optimality}
  hold. Let $y\in\Real^m$ be an arbitrary optimal solution for the
  dual gauge program~\eqref{eq:3}, $r_1\in\{1,\ldots,n\}$ be the
  multiplicity of $\lambda_1(\Ascr^*y)$, and
  $U_1\in\Complex^{n\times r_1}$ be the matrix formed by the first
  $r_1$ eigenvectors of $\Ascr^*y.$ Then a matrix $X\in\Hscr^n$ is a
  solution for the primal problem~\eqref{eq:sdpgp} if and only if
  there exists an $r_{1}\times r_{1}$ matrix $S\succeq0$ such that
  \begin{equation}\label{eq:pdf-opt}
  X=U_1SU_1^*
  \textt{and}
  (b-\Ascr X)\in\epsilon\partial\norm{\cdot}_*(y).
  \end{equation}

\end{corollary}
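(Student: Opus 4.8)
The plan is to invoke Proposition~\ref{prop:optimality} to recast ``$X$ solves~\eqref{eq:sdpgp}'' as a system of pointwise conditions, and then to show that this system is equivalent to the two requirements in~\eqref{eq:pdf-opt}. Under the standing hypotheses---feasibility of~\eqref{eq:sdp-primal} and $0\le\epsilon<\norm{b}$---Proposition~\ref{prop:optimality} applies, and since $y$ is by hypothesis optimal for~\eqref{eq:3}, the pair $(X,y)$ is primal-dual optimal precisely when $X$ solves~\eqref{eq:sdpgp}. Thus $X$ is a primal solution if and only if conditions~\ref{prop:sd:pf}--\ref{prop:sd:vn} hold for this fixed $y$; note that condition~\ref{prop:sd:df} is automatic because $y$ is dual optimal. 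I would then prove the corollary by establishing two separate equivalences: a structural one relating conditions~\ref{prop:sd:pf}, \ref{prop:sd:cp}, and~\ref{prop:sd:vn} to the representation $X=U_1SU_1^*$, and a residual one relating conditions~\ref{prop:sd:pf} and~\ref{prop:sd:cs} to the subgradient inclusion.

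For the structural equivalence, I would argue that ``$X\succeq0$ together with the complementarity~\ref{prop:sd:cp} and the simultaneous ordered eigendecomposition~\ref{prop:sd:vn}'' holds if and only if $X=U_1SU_1^*$ for some $S\succeq0$. Going forward, condition~\ref{prop:sd:vn} furnishes a single unitary that diagonalizes both $X$ and $\Ascr^*y$ with eigenvalues in matching decreasing order; because $\lambda_1(\Ascr^*y)$ has multiplicity $r_1$, condition~\ref{prop:sd:cp} forces $\lambda_i(X)=0$ for all $i>r_1$, so $X$ is supported entirely on the top eigenspace of $\Ascr^*y$, namely $\range U_1$. Going backward, $X=U_1SU_1^*$ with $S\succeq0$ gives $X\succeq0$ and $\range X\subseteq\range U_1$, and diagonalizing $S$ exhibits an eigenbasis of $X$ inside that eigenspace. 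I expect the main obstacle here to be the bookkeeping forced by a \emph{degenerate} largest eigenvalue ($r_1>1$): one must observe that any orthonormal basis of the top eigenspace differs from the columns of $U_1$ by an $r_1\times r_1$ unitary, and that $\Ascr^*y$ restricted to that eigenspace is $\lambda_1(\Ascr^*y)$ times the identity and hence invariant under such a rotation. This is exactly what lets an \emph{arbitrary} choice of the eigenvector block $U_1$ capture every admissible $X$, with the rotation and the nonnegative eigenvalues of $X$ absorbed into the single factor $S$.

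For the residual equivalence, I would use the standard subdifferential of a norm: for $y\neq0$, $v\in\partial\norm{\cdot}_*(y)$ exactly when $\norm{v}\le1$ and $\ip{v}{y}=\norm{y}_*$, where the unit ball is taken in the norm $\norm{\cdot}$ dual to $\norm{\cdot}_*$. Rescaling by $\epsilon$, the inclusion $(b-\Ascr X)\in\epsilon\,\partial\norm{\cdot}_*(y)$ says exactly that $\norm{b-\Ascr X}\le\epsilon$ and $\ip{y}{b-\Ascr X}=\epsilon\norm{y}_*$, which is conditions~\ref{prop:sd:pf} and~\ref{prop:sd:cs} once the first inequality is upgraded to the equality $\norm{b-\Ascr X}=\epsilon$. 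That upgrade is the one remaining point needing care: condition~\ref{prop:sd:df} gives $\ip{y}{b}-\epsilon\norm{y}_*=1$, so $y\neq0$ and $\norm{y}_*>0$, and chaining $\epsilon\norm{y}_*=\ip{y}{b-\Ascr X}\le\norm{y}_*\norm{b-\Ascr X}\le\epsilon\norm{y}_*$ via Cauchy--Schwarz and primal feasibility forces equality throughout. The case $\epsilon=0$ is handled separately but trivially, since then $\epsilon\,\partial\norm{\cdot}_*(y)=\{0\}$ and both sides reduce to $\Ascr X=b$. Combining the two equivalences reassembles the full conjunction of conditions~\ref{prop:sd:pf}--\ref{prop:sd:vn}, yielding~\eqref{eq:pdf-opt} and completing the proof.
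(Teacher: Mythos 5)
Your proposal is correct and takes essentially the same route as the paper's own proof: both reduce the statement to the optimality conditions of Proposition~\ref{prop:optimality}, split them into a structural equivalence (conditions~\ref{prop:sd:pf}, \ref{prop:sd:cp}, and~\ref{prop:sd:vn} versus $X=U_1SU_1^*$ with $S\succeq0$) and a residual equivalence (conditions~\ref{prop:sd:pf} and~\ref{prop:sd:cs} versus the subgradient inclusion), and use the identical chain $\epsilon\norm{y}_*=\ip{y}{b-\Ascr X}\le\norm{y}_*\norm{b-\Ascr X}\le\epsilon\norm{y}_*$ to force $\norm{b-\Ascr X}=\epsilon$. The one step you should not label ``automatic'' is condition~\ref{prop:sd:df}: optimality of $y$ by itself only gives $\ip{b}{y}-\epsilon\norm{y}_*\ge1$, and the paper obtains the equality from positive homogeneity of the dual objective and constraint together with positivity of the optimal dual value (alternatively, it follows from primal attainment plus the necessity half of the proposition), so that short scaling argument should be spelled out rather than asserted.
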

\begin{proof}
  The assumptions imply that the optimal dual value is positive.  If
  $y\in\Real^m$ is an optimal solution to~\eqref{eq:3}, the
  positive-homogeneity of its objective and constraint, and the
  positivity of the optimal value, allow us to deduce that the dual
  constraint must be active, i.e., $\ip{y}{b}-\epsilon\norm{y}_*=1.$
  Thus condition~\ref{prop:sd:df} of
  Proposition~\ref{prop:optimality} holds.

  The construction of $X$ in~\eqref{eq:pdf-opt} guarantees that it
  shares a simultaneous ordered eigendecomposition with $\Ascr^{*}y$,
  and that it has rank of $r_{1}$ at most. Thus,
  conditions~\ref{prop:sd:cp} and~\ref{prop:sd:vn} of
  Proposition~\ref{prop:optimality} hold.

  We now show that conditions~1 and~3 of the proposition hold. The
  subdifferential $\partial\norm{\cdot}_{*}$ corresponds to the set of
  maximizers of the linear function that defines the dual ball;
  see~\eqref{eq:5}. Then because
  $(b-\Ascr X)\in\epsilon\partial\norm{\cdot}_*(y),$ it holds that
  $\norm{b-\Ascr X}\leq\epsilon$ and
  $\epsilon\norm{y}_*=\ip{y}{b-\Ascr X}\leq\norm{y}_*\norm{b-\Ascr
    X}\leq\epsilon\norm{y}_*,$
  implying that $\norm{b-\Ascr X}=\epsilon$ and
  $\ip{y}{b-\Ascr X}=\norm{y}_*\norm{b-\Ascr X}.$ This way,
  condition~\ref{prop:sd:pf} and~\ref{prop:sd:cs} of
  Proposition~\ref{prop:optimality} are also satisfied. Hence, all
  the conditions of the proposition are satisfied, and the pair
  $(X,y)\in\Hscr^n\times\Real^m$ is optimal.

  Suppose now that $X\in\Hscr^n$ is optimal for~\eqref{eq:sdpgp}. We
  can invoke Proposition~\ref{prop:optimality} on the pair
  $(X,y)\in\Hscr^n\times\Real^m.$ Condition~\ref{prop:sd:cp} implies that
  any eigenvector of $\Ascr^*y$ associated to an eigenvalue
  $\lambda_i(\Ascr^*y)$ with $i>r_1$ is in the nullspace of $X,$
  therefore there is an $r_{1}\times r_{1}$ matrix $S\succeq0$ such
  that $X=U_1SU_1^*.$ Conditions~\ref{prop:sd:pf} and~\ref{prop:sd:cs}
  imply that $\norm{b-\Ascr X}\leq\epsilon$ and
  $\ip{y}{b-\Ascr X}=\epsilon\norm{y}_*,$ thus verifying that
  $(b-\Ascr X)\in\epsilon\partial\norm{\cdot}_*(y)$, as required.
\end{proof}

Corollary~\ref{corl:pfd} thus provides us with a way to recover a
solution to our model problem~\eqref{eq:sdp-primal} after computing a
solution to the gauge dual problem~\eqref{eq:sdpgp}.  When the
residual in~\eqref{eq:sdp-primal} is measured in the 2-norm,
condition~\eqref{eq:pdf-opt} simplifies, and implies that the matrix
$S$ that defines $X=USU^{*}$ can be obtained by solving
\begin{equation} \label{eq:primal-recovery}
  \minimize{S\succeq0}\quad\norm{\Ascr(U_{1}SU_{1}^{*})-b_{\epsilon}}^{2},
  \text{with}
  b_{\epsilon}:=b-\epsilon y/\norm{y}.
\end{equation}
When the multiplicity $r_1$ of the eigenvalue $\lambda_1(\Ascr^*y)$ is
much smaller than $n$, this optimization problem is relatively
inexpensive. In particular, if $r_{1}=1$---which may be expected in some
applications such as PhaseLift---the optimization problem is over a
scalar $s$ that can be obtained immediately as
\begin{equation*}
 s = [\ip{\Ascr(u_{1}u_{1}^*)}{b_\epsilon}]_{+}/\norm{\Ascr(u_{1}u_{1}^*)}^{2}
\end{equation*}
where $u_{1}$ is the rightmost eigenvalue of $\Ascr^{*}y$.  This
approach exploits the complementarity relation on eigenvalues in
condition~\ref{prop:sd:cp} of Proposition~\ref{prop:optimality} to
reduce the dimensionality of the primal solution recovery. Its
computational difficulty effectively depends on finding a dual
solution $y$ at which the rightmost eigenvalue has low multiplicity
$r_1$.

\section{Implementation} \label{sec:implementation}

The success of our approach hinges on efficiently solving the
constrained eigenvalue optimization problem~\eqref{eq:sdpld} in order
to generate solution estimates $y$ and rightmost eigenvector estimates
$U_{1}$ of $\Ascr^{*}y$ that we can feed
to~\eqref{eq:primal-recovery}. The two main properties of this problem
that drive our approach are that it has a nonsmooth objective and that
projections on the feasible set are inexpensive. Our implementation is
based on a basic projected-subgradient descent method, although
certainly other choices are available. For example,
\cite{nesterov2009unconstrained} and \cite{richtarik2011improved}
propose specialized algorithms for minimizing positively homogeneous
functions with affine constraints; some modification of this approach
could possibly apply to~\eqref{eq:sdpld}. Another possible choice is
Helmberg and Rendl's (\citeyear{HelmbergRendl:2000}) spectral bundle
method. For simplicity, and because it has proven sufficient for our
needs, we use a standard projected subgradient method, described
below.

\subsection{Dual descent} \label{sec:dual-descent}

The generic subgradient method is based on the iteration
\begin{equation}\label{eq:6}
  y_{+} = \Pscr(y - \alpha g),
\end{equation}
where $g$ is a subgradient of the objective at the current iterate
$y$, $\alpha$ is a positive steplength, and the operator
$\Pscr:\Real^{m}\to\Real^{m}$ gives the Euclidean projection onto the
feasible set. For the objective function
$f(y) = \lambda_{1}(\Ascr^{*}y)$ of~\eqref{eq:gaugedual}, the
subdifferential has the form
\begin{align} \label{eq:7}
\partial f(y)
  = \set{ \Ascr(U_{1}TU_{1}^{*}) | T\succeq0,\ \trace T=1 },
\end{align}
where $U_{1}$ is the $n\times r_{1}$ matrix of rightmost eigenvectors
of $\Ascr^{*}y$ \citep[Theorem~3]{doi:10.1137/0802007}. A Krylov-based
eigenvalue solver can be used to evaluate $f(y)$ and a subgradient
$g\in\partial f(y)$. Such methods require products of the form
$(\Ascr^{*}y)v$ for arbitrary vectors~$v$. In many cases, these
products can be computed without explicitly forming the matrix
$\Ascr^{*}y$. In particular, for the applications described
in section~\ref{sec:formulations}, these products can be computed entirely
using fast operators such as the FFT. Similar efficiencies can be used
to compute a subgradient $g$ from the forward map
$\Ascr(U_{1} T U_{1}^{*})$.

For large problems, further efficiencies can be obtained simply by
computing a single eigenvector $u_{1}$, i.e., any unit-norm vector in
the range of $U_{1}$. In our implementation, we typically request at
least \emph{two} rightmost eigenpairs: this gives us an opportunity to
detect if the leading eigenpair is isolated. If it is, then the
subdifferential contains only a single element, which implies that $f$
is differentiable at that point.

Any sequence of step lengths $\{\alpha\k\}$ that satisfies the generic
conditions
\[
 \lim_{k\to\infty}\alpha\k=0,
 \quad
 \sum_{k=0}^{\infty}\alpha\k = \infty
\]
is sufficient to guarantee that the value of the objective at $y\k$
converges to the optimal value \cite[Proposition
3.2.6]{bertsekas2015}. A typical choice is $\alpha\k=\Oscr(1/k)$. Our
implementation defaults to a Barzilai-Borwein steplength
\citep{BarzBorw:1988} with a nonmonotonic linesearch
\citep{doi:10.1137/S1052623403428208} if it is detected that a sequence
of iterates is differentiable (by observing separation of the leading
eigenpair); and otherwise it falls back to a decreasing step size.

The projection operator $\Pscr$ onto the dual-feasible
set~\eqref{eq:gaugedual} is inexpensive when the residual is measured
in the 2-norm. In particular, if $\epsilon=0$, the dual-feasible set
is a halfspace, and the projection can be accomplished in linear
time. When $\epsilon$ is positive, the projection requires computing
the roots of a 1-dimensional degree-4 polynomial, which in practice
requires little additional time.

\subsection{Primal recovery} \label{sec:primal-recovery}

At each iteration of the descent method~\eqref{eq:6} for the
eigenvalue optimization problem~\eqref{eq:gaugedual}, we compute a
corresponding primal estimate
\begin{equation} \label{eq:8}
 X_{+} = U_{1}S_{+}U_{1}^{*}
\end{equation}
maintained in factored form.  The matrix $U_{1}$ has already been
computed in the evaluation of the objective and its subgradient;
see~\eqref{eq:7}. The positive semidefinite matrix $S_{+}$ is the
solution of the primal-recovery problem~\eqref{eq:primal-recovery}.

A byproduct of the primal-recovery problem is that it provides a
suitable stopping criterion for the overall algorithm. Because the
iterations $y\k$ are dual feasible, it follows from
Corollary~\ref{corl:pfd} that if~\eqref{eq:primal-recovery} has a zero
residual, then the dual iterate $y\k$ and the corresponding primal
iterate $X\k$ are optimal. Thus, we use the size of the residual to
determine a stopping test for approximate optimality.

\subsection{Primal-dual refinement} \label{sec:pr-du-refinement}

The primal-recovery procedure outlined in section~\ref{sec:primal-recovery}
is used only as a stopping criterion, and does not directly affect the
sequence of dual iterates from~\eqref{eq:6}.  In our numerical
experiments, we find that significant gains can be had by refining the
primal estimate~\eqref{eq:8} and feeding it back into the dual
sequence. We use the following procedure, which involves two auxiliary
subproblems that add relatively little to the overall cost.

The first step is to refine the primal estimate obtained
via~\eqref{eq:primal-recovery} by using its solution to determine the
starting point $Z_{0}=U_{1}S_{+}^{1/2}$ for the smooth unconstrained
non-convex problem
\begin{equation} \label{eq:10}
 \minimize{Z\in\Complex^{n\times r}}
 \quad
 h(Z):=\tfrac14\norm{\Ascr(ZZ^{*})-b_{\epsilon}}^{2}.
\end{equation}
In effect, we continue to minimize~\eqref{eq:primal-recovery}, where
additionally $U_{1}$ is allowed to vary. Several options are available
for solving this smooth unconstrained problem. Our implementation has
the option of using a steepest-descent iteration with a spectral
steplength and non-monotone linesearch
\citep{doi:10.1137/S1052623403428208}, or a limited-memory BFGS
method \cite[Section 7.2]{NoceWrig:2006}. The main cost at each iteration is
the evaluation of the gradient
\begin{equation}\label{eq:14}
 \nabla h(Z) = \Ascr^{*}(\Ascr(ZZ^*)-b_{\epsilon})Z.
\end{equation}
We thus obtain a candidate improved primal estimate
$\Xhat=\Zhat\Zhat^{*}$, where $\Zhat$ is a solution
of~\eqref{eq:10}. When $\epsilon=0$, this non-convex problem coincides
with the problem used by \cite{7029630}. They use the initialization
$Z_{0}=\gamma u_{1}$, where $u_{1}$ is a leading eigenvector of
$\Ascr^{*}b$, and
$\gamma=n\sum_{i}b_{i}/\sum_{i}\norm{a_{i}}^{2}$. Our initialization,
on the other hand, is based on a solution of the primal-recovery
problem~\eqref{eq:primal-recovery}.

The second step of the refinement procedure is to construct a
candidate dual estimate $\yhat$ from a solution of the constrained
linear-least-squares problem
\begin{equation}\label{eq:9}
\minimize{y\in\Real^{m}} \quad \half\norm{(\Ascr^{*}y)\Zhat-\lambdahat\Zhat}^{2}
\quad\st\quad \ip b y-\epsilon\norm{y}_{*}\ge1,
\end{equation}
where $\lambdahat:=1/\trace\Xhat\equiv1/\norm{\Zhat}^{2}_{F} $ is the
reciprocal of the primal objective value associated with $\Xhat$.
This constrained linear-least-squares problem attempts to construct a
vector $\yhat$ such that the columns of $\Zhat$ correspond to eigenvectors
of $\Ascr^{*}\yhat$ associated with $\lambdahat$. If
$f(\yhat) < f(y_{+})$, then $\yhat$ improves on the current dual
iterate $y_{+}$ obtained by the descent method~\eqref{eq:6}, and we
are free to use $\yhat$ in its place. This improved estimate, which is
exogenous to the dual descent method, can be considered a ``spacer''
iterate, as described by
\citet[Proposition~1.2.6]{Bert:1999}. Importantly, it does not
interfere with the convergence of the underlying descent method.  The
projected-descent method used to solve the dual sequence can also be
applied to~\eqref{eq:9}, though in this case the objective is
guaranteed to be differentiable.

\subsection{Algorithm summary} \label{sec:algo-summary}

The following steps summarize one iteration of the dual-descent
algorithm: $y$ is the current dual iterate, and $y^+$ is the updated
iterate. The primal iterate $X$ is maintained in factored form. Steps
5-7 implement the primal-dual refinement strategy described in
section~\ref{sec:pr-du-refinement}, and constitute a heuristic that may
improve the performance of the dual descent algorithm without
sacrificing convergence guarantees.

\vspace{-.5\baselineskip}
\RestyleAlgo{boxed}
\begin{algorithm}
  \DontPrintSemicolon
  \nl$(\lambda_1, U_1) \gets\ \lambda_1(\Ascr^* y)$
  \Comment*{eigenvalue computation}

  \nl \makebox[3ex][l]{$g$} $\gets\ \Ascr(U_1 T U^{*}_1)$
  \Comment*{gradient of dual objective; cf.~\eqref{eq:7}}
  
  \nl \makebox[3ex][l]{$y^+$} $\gets\ \Pscr(y - \alpha g)$
  \Comment*{projected subgradient step; cf.~\eqref{eq:6}}

  \nl \makebox[3ex][l]{$S_+$} $\gets\ $ solution of \eqref{eq:primal-recovery}
  \Comment*{primal recovery subproblem}

  \nl \makebox[3ex][l]{$\Zhat$} $\gets\ $ solution of \eqref{eq:10} initialized with $S_+$
  \Comment*{primal refinement}

  \nl \makebox[3ex][l]{$\yhat$} $\gets\ $ solution of \eqref{eq:9} initialized with $\Zhat$
  \Comment*{dual refinement}

  \If{$\lambda_1(\Ascr^*\yhat)<\lambda_1$} 
  {\nl$y^+\gets\yhat$ \Comment*{spacer step}}
\end{algorithm}
\vspace{-.5\baselineskip}

\noindent In Step~1, the rightmost eigenpair $(\lambda_1, U_1)$ of $\Ascr^* y$
is computed. The eigenvectors in the matrix $U_1$ are used in Step~2
to compute a subgradient $g$ for the dual objective. Any PSD matrix
$T$ that has trace equal to~1 can be used in Step~2. For example, the
case where only a single rightmost eigenvector $u_1$ can be afforded
corresponds to setting $T$ so that $U_1 TU_1^*=u_1u_1^*$. Step~3 is a
projected subgradient iteration with steplength $\alpha$. Step~4
solves the primal-recovery problem to determine the matrix $S_+$ used
to define a primal estimate $X_+=U_1 S_+ U_1^*$;
cf.~\eqref{eq:pdf-opt}. We use the factorization
$Z_0:=U_1 S_+^{\nicefrac12}$ to initialize the algorithm in the next
step.  Step~5 applies an algorithm to the nonlinear least-squares
problem~\eqref{eq:10} to obtain a stationary point $\Zhat$ used to
define the dual-refinement problem used in the next step. Step~6
computes a candidate dual solution $\yhat$ that---if it improves the
dual objective---is used to replace the latest dual estimate $y_+$.

\section{Numerical experiments} \label{sec:experiments}

This section reports on a set of numerical experiments for solving
instances of the phase retrieval and blind deconvolution problems
described in section~\ref{sec:formulations}. The various algorithmic pieces
described in section~\ref{sec:implementation} have been implemented as a
\Matlab\ software package. The implementation uses \Matlab's
\texttt{eigs} routine for the eigenvalue computations described in
section~\ref{sec:dual-descent}. We implemented a projected gradient-descent
method, which is used for solving~\eqref{eq:6}, \eqref{eq:10}, and~\eqref{eq:9}.

\subsection{Phase recovery} \label{ssec:plexp}

We conduct three experiments for phase retrieval via the PhaseLift
formulation. The first experiment is for a large collection of small
one-dimensional random signals, and is meant to contrast the approach
against a general-purpose convex optimization algorithm and a
specialized non-convex approach.  The second experiment tests problems
where the vector of observations $b$ is contaminated by noise, hence
testing the case where $\epsilon>0$. The third experiment tests the
scalability of the approach on a large two-dimensional natural image.

Our phase retrieval experiments follow the approach outlined
in~\cite{7029630}. The diagonal matrices
$C\k\in\Complex^{n\times n}$ encode diffraction patterns that
correspond to the $k$th ``mask'' ($k=1,\ldots,L$) through which a
signal $x_{0}\in\Complex^{n}$ is measured. The measurements are given
by
\[
b = \Ascr(x_{0}x_{0}^{*}):=\diag\left[
\begin{pmatrix}FC_{1}\\\vdots\\FC\L\end{pmatrix}
(x_{0}x_{0}^{*})
\begin{pmatrix}FC_{1}\\\vdots\\FC\L\end{pmatrix}^{\!\!*\,}
\right],
\]
where $F$ is the unitary discrete Fourier transform (DFT).  The
adjoint of the associated linear map $\Ascr$ is then
\[
 \Ascr^{*}y := \sum_{k=1}^{L} C_k^* F^* \Diag(y_k) F C_k,
\]
where $y=(y_{1},\ldots,y_{L})$ and $\Diag(y\k)$ is the diagonal matrix
formed from the vector $y\k$. The main cost in the evaluation of the
forward map $\Ascr(VV^{*})$ involves $L$ applications of the DFT for
each column of $V$. Each evaluation of the adjoint map applied to a
vector $v$---i.e., $(\Ascr^{*}y)v$---requires $L$ applications of both
the DFT and its inverse. In the experimental results reported below,
the columns labeled ``nDFT'' indicate the total number of DFT
evaluations used over the course of a run. The costs of these DFT
evaluations are invariant across the different algorithms, and
dominate the overall computation.

\subsubsection{Random Gaussian signals} \label{sec:random-gaussian-experiments}

In this section we consider a set of experiments for different numbers
of masks. For each value of $L=6,7,\ldots,12$, we generate a fixed set
of~100 random complex Gaussian vectors $x_{0}$ of length $n=128$, and
a set of~$L$ random complex Gaussian masks $C_{k}$.

Table~\ref{tab:phaselift-random} summarizes the results of applying
four different solvers to each set of 100 problems. The solver
\texttt{GAUGE} is our implementation of the approach summarized in
section~\ref{sec:algo-summary}; \texttt{TFOCS} \citep{BeckBobCandGrant:2011}
is a first-order conic solver applied to the primal
problem~\eqref{eq:sdp-primal}.  The version used here was modified to
avoid explicitly forming the matrix $\Ascr^{*}y$
\citep{StrohmerPrivate:2013}. The algorithm \texttt{WFLOW}
\citep{7029630} is a non-convex approach that attempts to recover the
original signal directly from the feasibility problem~\eqref{eq:10},
with $\epsilon=0$. To make sensible performance comparisons to
\texttt{WFLOW}, we add to its implementation a stopping test based on
the norm of the gradient~\eqref{eq:14}; the default algorithm
otherwise uses a fixed number of iterations.

\begin{table}[tb]
  \caption{Phase retrieval comparisons for random complex Gaussian signals of
    size $n=128$ measured using random complex Gaussian masks.
    Numbers of the form $n_{-e}$ are a shorthand for
    $n\cdot10^{-e}$.}
  \label{tab:phaselift-random}
\begin{center}\small
\begin {tabular}{rcr<{\pgfplotstableresetcolortbloverhangright }@{}l<{\pgfplotstableresetcolortbloverhangleft }cr<{\pgfplotstableresetcolortbloverhangright }@{}l<{\pgfplotstableresetcolortbloverhangleft }cr<{\pgfplotstableresetcolortbloverhangright }@{}l<{\pgfplotstableresetcolortbloverhangleft }rcr<{\pgfplotstableresetcolortbloverhangright }@{}l<{\pgfplotstableresetcolortbloverhangleft }r}%
\toprule & \multicolumn {3}{c}{\tt GAUGE} & \multicolumn {3}{c}{\tt GAUGE-feas} & \multicolumn {4}{c}{\tt TFOCS} & \multicolumn {4}{c}{\tt WFLOW} \\ \cmidrule (lr){2 -4} \cmidrule (lr){ 5- 7} \cmidrule (lr){8-11} \cmidrule (l ){12-15}$L$&nDFT&\multicolumn {2}{c}{xErr}&nDFT&\multicolumn {2}{c}{xErr}&nDFT&\multicolumn {2}{c}{xErr}&\%&nDFT&\multicolumn {2}{c}{xErr}&\%\\\midrule %
\pgfutilensuremath {12}&\pgfutilensuremath {18{,}330}&$1.6$&$_{-6}$&\pgfutilensuremath {3{,}528}&$1.3$&$_{-6}$&\pgfutilensuremath {2{,}341{,}800}&$3.6$&$_{-3}$&\pgfutilensuremath {100}&\pgfutilensuremath {5{,}232}&$1.2$&$_{-5}$&\pgfutilensuremath {100}\\%
\pgfutilensuremath {11}&\pgfutilensuremath {19{,}256}&$1.5$&$_{-6}$&\pgfutilensuremath {3{,}344}&$1.4$&$_{-6}$&\pgfutilensuremath {2{,}427{,}546}&$4.3$&$_{-3}$&\pgfutilensuremath {100}&\pgfutilensuremath {4{,}906}&$1.6$&$_{-5}$&\pgfutilensuremath {100}\\%
\pgfutilensuremath {10}&\pgfutilensuremath {19{,}045}&$1.4$&$_{-6}$&\pgfutilensuremath {3{,}120}&$1.6$&$_{-6}$&\pgfutilensuremath {2{,}857{,}650}&$5.5$&$_{-3}$&\pgfutilensuremath {100}&\pgfutilensuremath {4{,}620}&$2.1$&$_{-5}$&\pgfutilensuremath {100}\\%
\pgfutilensuremath {9}&\pgfutilensuremath {21{,}933}&$1.6$&$_{-6}$&\pgfutilensuremath {2{,}889}&$1.4$&$_{-6}$&\pgfutilensuremath {1.2\cdot 10^{7}}&$7.5$&$_{-3}$&\pgfutilensuremath {89}&\pgfutilensuremath {4{,}374}&$2.5$&$_{-5}$&\pgfutilensuremath {100}\\%
\pgfutilensuremath {8}&\pgfutilensuremath {23{,}144}&$2.1$&$_{-6}$&\pgfutilensuremath {2{,}688}&$1.9$&$_{-6}$&\pgfutilensuremath {1.1\cdot 10^{7}}&$1.2$&$_{-2}$&\pgfutilensuremath {22}&\pgfutilensuremath {4{,}080}&$3.3$&$_{-5}$&\pgfutilensuremath {100}\\%
\pgfutilensuremath {7}&\pgfutilensuremath {25{,}781}&$1.8$&$_{-6}$&\pgfutilensuremath {2{,}492}&$2.0$&$_{-6}$&\pgfutilensuremath {6{,}853{,}245}&$2.4$&$_{-2}$&\pgfutilensuremath {0}&\pgfutilensuremath {3{,}836}&$5.2$&$_{-5}$&\pgfutilensuremath {95}\\%
\pgfutilensuremath {6}&\pgfutilensuremath {34{,}689}&$3.0$&$_{-6}$&\pgfutilensuremath {2{,}424}&$2.5$&$_{-6}$&\pgfutilensuremath {2{,}664{,}126}&$6.4$&$_{-2}$&\pgfutilensuremath {0}&\pgfutilensuremath {3{,}954}&$9.5$&$_{-5}$&\pgfutilensuremath {62}\\\bottomrule %
\end {tabular}%

\end{center}
\end{table}

\begin{table}[tb]
  \caption{Additional comparisons for the random examples of Table~\ref{tab:phaselift-random}.}
  \label{tab:phaselift-random-nodfp}
\centering\small
\begin {tabular}{rcr<{\pgfplotstableresetcolortbloverhangright }@{}l<{\pgfplotstableresetcolortbloverhangleft }cr<{\pgfplotstableresetcolortbloverhangright }@{}l<{\pgfplotstableresetcolortbloverhangleft }}%
\toprule & \multicolumn {3}{c}{\tt GAUGE} & \multicolumn {3}{c}{\tt GAUGE-nodfp} \\ \cmidrule (lr){2-4} \cmidrule (lr){5-7}$L$&nDFT&\multicolumn {2}{c}{xErr}&nDFT&\multicolumn {2}{c}{xErr}\\\midrule %
\pgfutilensuremath {12}&\pgfutilensuremath {18{,}330}&$1.6$&$_{-6}$&\pgfutilensuremath {277{,}722}&$1.6$&$_{-6}$\\%
\pgfutilensuremath {11}&\pgfutilensuremath {19{,}256}&$1.5$&$_{-6}$&\pgfutilensuremath {314{,}820}&$1.6$&$_{-6}$\\%
\pgfutilensuremath {10}&\pgfutilensuremath {19{,}045}&$1.4$&$_{-6}$&\pgfutilensuremath {374{,}190}&$2.0$&$_{-6}$\\%
\pgfutilensuremath {9}&\pgfutilensuremath {21{,}933}&$1.6$&$_{-6}$&\pgfutilensuremath {485{,}658}&$1.9$&$_{-6}$\\%
\pgfutilensuremath {8}&\pgfutilensuremath {23{,}144}&$2.1$&$_{-6}$&\pgfutilensuremath {808{,}792}&$1.9$&$_{-6}$\\%
\pgfutilensuremath {7}&\pgfutilensuremath {25{,}781}&$1.8$&$_{-6}$&\pgfutilensuremath {2{,}236{,}885}&$2.3$&$_{-6}$\\%
\pgfutilensuremath {6}&\pgfutilensuremath {34{,}689}&$3.0$&$_{-6}$&\pgfutilensuremath {14{,}368{,}437}&$2.9$&$_{-6}$\\\bottomrule %
\end {tabular}%
\end{table}

We also show the results of applying the \texttt{GAUGE} code in a
``feasibility'' mode that exits as soon as the primal-refinment
subproblem (see Step~7 of the algorithm summary in
section~\ref{sec:algo-summary}) obtains a solution with a small
residual. This resulting solver is labeled \texttt{GAUGE-feas}. This
variant of \texttt{GAUGE} is in some respects akin to
\texttt{WFLOW}, with the main difference that \texttt{GAUGE-feas} uses
starting points generated by the dual-descent estimates, and generates
search directions and step-lengths for the feasibility problem from a
spectral gradient algorithm. The columns labeled ``xErr'' report the
median relative error
$\norm{x_{0}x_{0}^{*}-\xhat\xhat^{*}}\F/\norm{x_{0}}^{2}_{2}$ of the
100 runs, where $\xhat$ is the solution returned by the corresponding
solver. The columns labeled ``\%'' give the percentage of problems
solved to within a relative error of $10^{-2}$. At least on this set
of artificial experiments, the \texttt{GAUGE} solver (and its
feasibility variant \texttt{GAUGE-feas}) appear to be most
efficient. Table~\ref{tab:phaselift-random-nodfp} provides an
additional comparison of \texttt{GAUGE} with the variation
\texttt{GAUGE-nodfp}, which ignores the ``spacer'' iterate computed
by~\eqref{eq:9}. There seems to be significant practical benefit in
using the refined primal estimate to improve the dual sequence.  The
columns labeled ``\%'' are excluded for all versions of \texttt{GAUGE}
because these solvers obtained the prescribed accuracy for all
problems in each test set.

Note that the relative accuracy ``xErr'' is often slightly better for
\texttt{GAUGE-feas} than for \texttt{GAUGE}.  These small small
discrepancies are explained by the different stopping criteria between
the two versions of the solver. In particular, \texttt{GAUGE} will
continue iterating past the point at which \texttt{GAUGE-feas} 
normally terminates because it is searching for a dual certificate that
corresponds to the recovered primal estimate. This slightly changes
the computed subspaces $U_1$, which influence subsequent primal
estimates. Similar behaviour is exhbited in the noisy cases that we
consider in the next section.

As the number of measurements ($L$) decreases, we expect the problem
to be more difficult. Indeed, we can observe that the total amount of
work, as measured by the number of operator evaluations (i.e., the
ratio between \texttt{nDFT} and $L$), increases monotonically for all
variations of \texttt{GAUGE}.

\subsubsection{Random problems with noise}

In this set of experiments, we assess the effectiveness of the SDP
solver to problems with $\epsilon>0$, which could be useful in
recovering signals with noise.  For this purpose, it is convenient to
generate problems instances with noise and known primal-dual
solutions, which we can do by using Corollary~\ref{corl:pfd}. Each
instance is generated by first sampling octanary masks $C_k$---as
described by \cite{7029630}---and real Gaussian vectors $y\in\Real^m;$ a
solution $x_0\in\Complex^n$ is then chosen as a unit-norm rightmost
eigenvector of $\Ascr^*y$, and the measurements are computed as
$b:=\Ascr(x_0x_0^*)+\epsilon y/\norm{y},$ where $\epsilon$ is chosen
as $\epsilon:=\norm{b-\Ascr(x_0x_0^*)}=\eta\norm{b},$ for a given
noise-level parameter $\eta\in(0,1).$

\begin{table}[tb]
  \caption{Phase retrieval comparisons for problems with noise, i.e., $\epsilon>0$.
    Numbers of the form $n_{-e}$ are a shorthand for $n\cdot10^{-e}$. }
\label{tab:phaselift-random-noisy}
\centering\small
\begin {tabular}{rrrrrrrrrr}%
\toprule & & \multicolumn {2}{c}{\tt GAUGE} & \multicolumn {2}{c}{\tt GAUGE-nodfp} & \multicolumn {2}{c}{\tt GAUGE-feas} & \multicolumn {2}{c}{\tt WFLOW} \\ \cmidrule (lr){3 -4} \cmidrule (lr){5- 6} \cmidrule (lr){7- 8} \cmidrule (lr){9-10}$L$&$\eta \quad $&nDFT&\%&nDFT&\%&nDFT&\%&nDFT&\%\\\midrule %
\pgfutilensuremath {12}&\pgfutilensuremath {0.1}\%&\pgfutilensuremath {4{,}584}&\pgfutilensuremath {100}&\pgfutilensuremath {29{,}988}&\pgfutilensuremath {100}&\pgfutilensuremath {936}&\pgfutilensuremath {100}&\pgfutilensuremath {14{,}856}&\pgfutilensuremath {100}\\%
\pgfutilensuremath {9}&\pgfutilensuremath {0.1}\%&\pgfutilensuremath {3{,}222}&\pgfutilensuremath {100}&\pgfutilensuremath {36{,}292}&\pgfutilensuremath {100}&\pgfutilensuremath {774}&\pgfutilensuremath {100}&\pgfutilensuremath {11{,}511}&\pgfutilensuremath {100}\\%
\pgfutilensuremath {6}&\pgfutilensuremath {0.1}\%&\pgfutilensuremath {2{,}232}&\pgfutilensuremath {100}&\pgfutilensuremath {50{,}235}&\pgfutilensuremath {100}&\pgfutilensuremath {612}&\pgfutilensuremath {100}&\pgfutilensuremath {8{,}922}&\pgfutilensuremath {98}\\%
\pgfutilensuremath {12}&\pgfutilensuremath {0.5}\%&\pgfutilensuremath {3{,}768}&\pgfutilensuremath {100}&\pgfutilensuremath {27{,}252}&\pgfutilensuremath {100}&\pgfutilensuremath {936}&\pgfutilensuremath {100}&\pgfutilensuremath {14{,}808}&\pgfutilensuremath {100}\\%
\pgfutilensuremath {9}&\pgfutilensuremath {0.5}\%&\pgfutilensuremath {2{,}934}&\pgfutilensuremath {100}&\pgfutilensuremath {31{,}032}&\pgfutilensuremath {100}&\pgfutilensuremath {774}&\pgfutilensuremath {100}&\pgfutilensuremath {11{,}430}&\pgfutilensuremath {100}\\%
\pgfutilensuremath {6}&\pgfutilensuremath {0.5}\%&\pgfutilensuremath {2{,}148}&\pgfutilensuremath {100}&\pgfutilensuremath {38{,}766}&\pgfutilensuremath {100}&\pgfutilensuremath {606}&\pgfutilensuremath {100}&\pgfutilensuremath {8{,}790}&\pgfutilensuremath {98}\\%
\pgfutilensuremath {12}&\pgfutilensuremath {1.0}\%&\pgfutilensuremath {3{,}744}&\pgfutilensuremath {100}&\pgfutilensuremath {22{,}620}&\pgfutilensuremath {97}&\pgfutilensuremath {936}&\pgfutilensuremath {100}&\pgfutilensuremath {14{,}712}&\pgfutilensuremath {100}\\%
\pgfutilensuremath {9}&\pgfutilensuremath {1.0}\%&\pgfutilensuremath {2{,}934}&\pgfutilensuremath {100}&\pgfutilensuremath {24{,}813}&\pgfutilensuremath {96}&\pgfutilensuremath {774}&\pgfutilensuremath {100}&\pgfutilensuremath {11{,}331}&\pgfutilensuremath {99}\\%
\pgfutilensuremath {6}&\pgfutilensuremath {1.0}\%&\pgfutilensuremath {1\cdot 10^{7}}&\pgfutilensuremath {97}&\pgfutilensuremath {2\cdot 10^{5}}&\pgfutilensuremath {98}&\pgfutilensuremath {600}&\pgfutilensuremath {53}&\pgfutilensuremath {8{,}634}&\pgfutilensuremath {8}\\%
\pgfutilensuremath {12}&\pgfutilensuremath {5.0}\%&\pgfutilensuremath {95{,}952}&\pgfutilensuremath {26}&\pgfutilensuremath {9\cdot 10^{5}}&\pgfutilensuremath {90}&\pgfutilensuremath {936}&\pgfutilensuremath {0}&\pgfutilensuremath {14{,}148}&\pgfutilensuremath {0}\\%
\pgfutilensuremath {9}&\pgfutilensuremath {5.0}\%&\pgfutilensuremath {2\cdot 10^{6}}&\pgfutilensuremath {89}&\pgfutilensuremath {8\cdot 10^{5}}&\pgfutilensuremath {90}&\pgfutilensuremath {774}&\pgfutilensuremath {0}&\pgfutilensuremath {10{,}701}&\pgfutilensuremath {0}\\%
\pgfutilensuremath {6}&\pgfutilensuremath {5.0}\%&\pgfutilensuremath {2\cdot 10^{6}}&\pgfutilensuremath {82}&\pgfutilensuremath {5\cdot 10^{5}}&\pgfutilensuremath {98}&\pgfutilensuremath {600}&\pgfutilensuremath {0}&\pgfutilensuremath {7{,}728}&\pgfutilensuremath {0}\\%
\pgfutilensuremath {12}&\pgfutilensuremath {10.0}\%&\pgfutilensuremath {1\cdot 10^{5}}&\pgfutilensuremath {17}&\pgfutilensuremath {1\cdot 10^{6}}&\pgfutilensuremath {89}&\pgfutilensuremath {912}&\pgfutilensuremath {0}&\pgfutilensuremath {13{,}548}&\pgfutilensuremath {0}\\%
\pgfutilensuremath {9}&\pgfutilensuremath {10.0}\%&\pgfutilensuremath {8\cdot 10^{5}}&\pgfutilensuremath {78}&\pgfutilensuremath {7\cdot 10^{5}}&\pgfutilensuremath {91}&\pgfutilensuremath {765}&\pgfutilensuremath {0}&\pgfutilensuremath {10{,}125}&\pgfutilensuremath {0}\\%
\pgfutilensuremath {6}&\pgfutilensuremath {10.0}\%&\pgfutilensuremath {7\cdot 10^{5}}&\pgfutilensuremath {90}&\pgfutilensuremath {5\cdot 10^{5}}&\pgfutilensuremath {100}&\pgfutilensuremath {588}&\pgfutilensuremath {0}&\pgfutilensuremath {7{,}098}&\pgfutilensuremath {0}\\%
\pgfutilensuremath {12}&\pgfutilensuremath {50.0}\%&\pgfutilensuremath {2\cdot 10^{5}}&\pgfutilensuremath {53}&\pgfutilensuremath {5\cdot 10^{5}}&\pgfutilensuremath {94}&\pgfutilensuremath {888}&\pgfutilensuremath {0}&\pgfutilensuremath {11{,}424}&\pgfutilensuremath {0}\\%
\pgfutilensuremath {9}&\pgfutilensuremath {50.0}\%&\pgfutilensuremath {1\cdot 10^{5}}&\pgfutilensuremath {42}&\pgfutilensuremath {3\cdot 10^{5}}&\pgfutilensuremath {99}&\pgfutilensuremath {738}&\pgfutilensuremath {0}&\pgfutilensuremath {8{,}586}&\pgfutilensuremath {0}\\%
\pgfutilensuremath {6}&\pgfutilensuremath {50.0}\%&\pgfutilensuremath {1\cdot 10^{5}}&\pgfutilensuremath {24}&\pgfutilensuremath {2\cdot 10^{5}}&\pgfutilensuremath {95}&\pgfutilensuremath {588}&\pgfutilensuremath {0}&\pgfutilensuremath {7{,}176}&\pgfutilensuremath {0}\\\bottomrule %
\end {tabular}%

\end{table}

For these experiments, we generate 100 instances with $n=128$ for each
pairwise combination $(L,\eta)$ with $L\in\{6,9,12\}$ and
$\eta\in\{0.1\%,0.5\%,1\%,5\%,10\%,50\%\}.$
Table~\ref{tab:phaselift-random-noisy} summarizes the results of
applying the three variations of \texttt{GAUGE}, and the
\texttt{WFLOW} solver, to these problems. It is not clear that
\texttt{GAUGE-feas} and \texttt{WFLOW} are relevant for this
experiment, but for interest we include them in the results. As with
the experiments in section~\ref{sec:random-gaussian-experiments}, a solve is
``successful'' if it recovers the true solution with a relative error
of $10^{-2}$. The median relative error for all solvers is comparable,
and hence we omit the column ``xErr''. \texttt{GAUGE-nodfp}
is generally successful in recovering the rank-1 minimizer for most
problems---even for cases with significant noise, though in these
cases the overall cost increases considerably. On the other hand,
\texttt{GAUGE} is less successful: it appears that although the
primal-dual refinement procedure can help to reduce the cost of
successful recovery in low-noise settings, in high-noise settings it
may obtain primal solutions that are not necessarily close to the true
signal. For noise levels over 5\%, \texttt{GAUGE-feas} and
\texttt{WFLOW} are unable to recover a solution within the prescribed
accuracy, which points to the benefits of the additional cost of
obtaining a primal-dual optimal point, rather than just a primal
feasible point.

\subsubsection{Two-dimensional signal}

We conduct a second experiment on a stylized application in order to
assess the scalability of the approach to larger problem sizes. In
this case the measured signal $x_{0}$ is a two-dimensional image of
size $1600\times 1350$ pixels, shown in Figure~\ref{fig:nebula}, which
corresponds to $n=2.2\cdot10^{6}$. The size of the lifted formulation
is on the order of $n^{2}\approx10^{12}$, which makes it clear that
the resulting SDP is enormous, and must be handled by a specialized
solver. We have excluded \texttt{TFOCS} from the list of candidate
solvers because it cannot make progress on this example. We generate
10 and 15 octanary masks. Table~\ref{tab:2-d-image}
summarizes the results. The column headers carry the same meaning as
Table~\ref{tab:phaselift-random}.

\begin{figure}[tb]
  \centering
  \includegraphics[width=.6\textwidth]{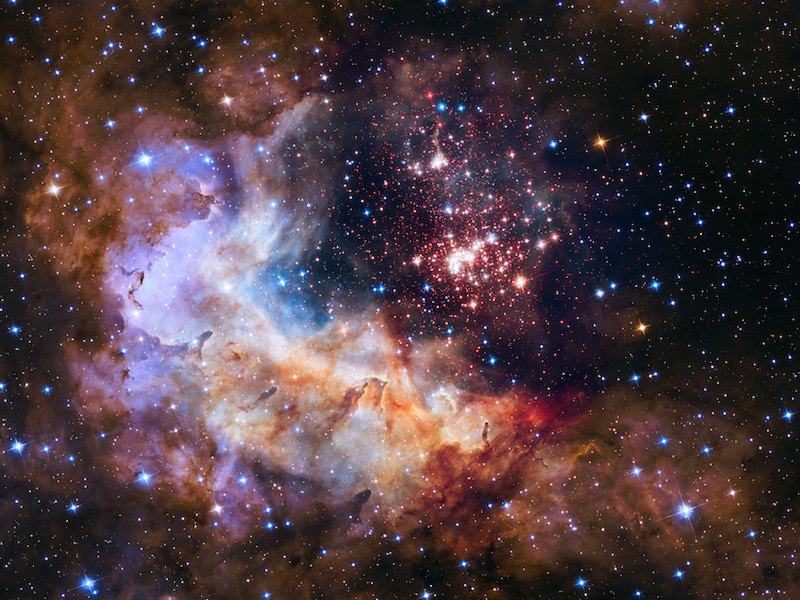}
  \caption{Image used for phase retrieval experiment; size $1600\times
  1350$ pixels (7.5MB).}
  \label{fig:nebula}
\end{figure}
\begin{table}[t]
\caption{Phase retrieval comparisons on a two-dimensional image.}
\label{tab:2-d-image}
\centering\small
\begin {tabular}{rcccccc}%
\toprule & \multicolumn {2}{c}{\tt GAUGE} & \multicolumn {2}{c}{\tt GAUGE-feas} & \multicolumn {2}{c}{\tt WFLOW} \\ \cmidrule (lr){2 -3} \cmidrule (lr){4- 5} \cmidrule (lr){6- 7}$L$&nDFT&xErr&nDFT&xErr&nDFT&xErr\\\midrule %
\pgfutilensuremath {15}&\pgfutilensuremath {200{,}835}&\pgfutilensuremath {2.1\cdot 10^{-6}}&\pgfutilensuremath {5{,}700}&\pgfutilensuremath {2.1\cdot 10^{-6}}&\pgfutilensuremath {8{,}100}&\pgfutilensuremath {4.1\cdot 10^{-6}}\\%
\pgfutilensuremath {10}&\pgfutilensuremath {195{,}210}&\pgfutilensuremath {5.8\cdot 10^{-7}}&\pgfutilensuremath {12{,}280}&\pgfutilensuremath {9.1\cdot 10^{-7}}&\pgfutilensuremath {12{,}340}&\pgfutilensuremath {2.1\cdot 10^{-5}}\\\bottomrule %
\end {tabular}%

\end{table}

\subsection{Blind deconvolution} \label{ssec:bcsexp}

In this blind deconvolution experiment, the convolution of two signals
$s_{1}\in\Complex^{m}$ and $s_{2}\in\Complex^{m}$ are measured. Let
$B_{1}\in\Complex^{m\times n_{1}}$ and
$B_{2}\in\Complex^{m\times n_{2}}$ be two bases. The circular
convolution of the signals can be described by
\begin{align*}
  b = s_{1}*s_{2}&= (B_1x_1)*(B_2x_2)
  \\&= F^{-1}\diag\big((FB_1x_1)(FB_2x_2)^T\big)
  \\&= F^{-1}\diag\big((FB_1)(x_1\conj{x}_2^*)(\conj{FB}_2)^*\big)
  =: \Ascr(x_{1}\conj{x}_{2}^{*}),
\end{align*}
where $\Ascr$ is the corresponding asymmetric linear map with the adjoint
\[
\Ascr^{*}y := (FB_1)^*\Diag(Fy)(\conj{FB_2}).
\]
Because $F$ is unitary, it is possible to work instead with
measurements
\[
\bhat \equiv Fb  = \diag\big((FB_1)(x_1\conj{x}_2^*)(\conj{FB_{2}})^*\big)
\]
in the Fourier domain. For the experiments that we run, we choose to
work with the former real-valued measurements $b$ because they do not
require accounting for the imaginary parts, and thus the number of
constraints in~\eqref{eq:11} that would be required otherwise is
reduced by half.

\begin{figure}[tb]
  \centering
  \begin{tabular}{@{}c@{}c@{}c@{}}
    \includegraphics[trim={16 32 48 32},clip,width=.32\textwidth]{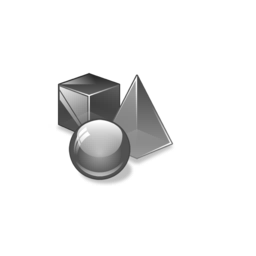}
   &\includegraphics[trim={32 32 32 32},clip,width=.32\textwidth]{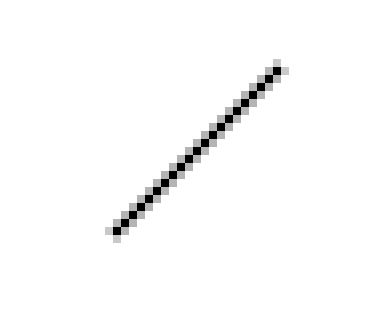}
   &\includegraphics[trim={16 32 48 32},clip,width=.32\textwidth]{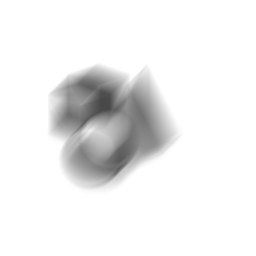}
 \\[-10pt](a)&(b)&(c)
  \end{tabular}
\\[12pt]
  \begin{tabular}{c@{\ }c@{\ }c@{}}
   \includegraphics[trim={16 32 48 32},clip,width=.32\textwidth]{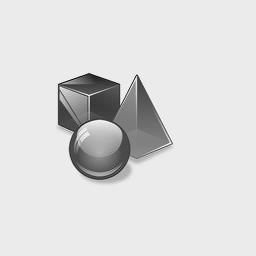}
  &\includegraphics[trim={16 32 48 32},clip,width=.32\textwidth]{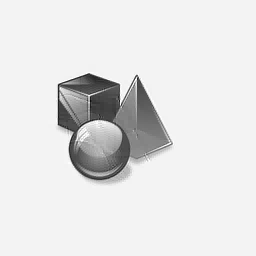}
  &\includegraphics[trim={16 32 48 32},clip,width=.32\textwidth]{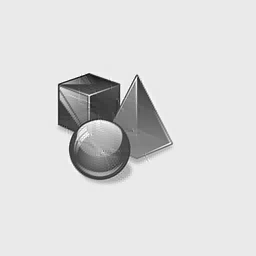}
  \\(d)&(e)&(f)
  \end{tabular}
  \caption{Images used for the blind deconvolution experiments: (a)
    original image; (b) zoom of the motion-blurring kernel; (c)
    blurred image; (d) image recovered by the augmented Lagrangian
    approach; (e) image recovered by \texttt{GAUGE}; (f) image
    recovered by \texttt{GAUGE-feas}. The shaded background on the
    recovered images is an artifact of the uniform scaling used to
    highlight any error between the original and recovered signals. }
  \label{fig:bd}
\end{figure}

We follow the experimental setup outlined by \cite{Ahmed:2014} and use
the data and code that they provide. In that setup, $B_{1}$ is a
subset of the Haar wavelet basis, and $B_{2}$ is a mask corresponding
to a subset of the identity basis. The top row of Figure~\ref{fig:bd}
shows the original image, the blurring kernel, and the observed
blurred image. The second row of the figure shows the image
reconstructed using the augmented Lagrangian code provided by
\cite{Ahmed:2014}, \texttt{GAUGE}, and
\texttt{GAUGE-feas}. Table~\ref{tab:bd} summarizes the results of
applying the three solvers. The columns headed ``nDFT'' and ``nDWT''
count the number of discrete Fourier and wavelet transforms required
by each solver; the columns headed ``xErr1'' and ``xErr2'' report the
relative errors $\norm{x_{i}-\xhat_{i}}_{2}/\norm{x_{i}}_{2}$,
$i=1,2$, where $\xhat_{i}$ are the recovered solutions; the column
headed ``rErr'' reports the relative residual error
$\norm{b-\Ascr(\xhat_{1}\conj{\xhat}^{*}_{2})}_{2}/\norm{b}_{2}$.
Although the non-convex augmented Lagrangian approach yields visibly
better recovery of the image, the table reveals that the solutions are
numerically similar, and are recovered with far less work.

\begin{table}[t]
\caption{Blind deconvolution comparisons.}
\label{tab:bd}
\centering\small
\begin {tabular}{lccccc}%
\toprule solver&nDFT&nDWT&xErr1&xErr2&rErr\\\midrule %
aug Lagrangian&\pgfutilensuremath {92{,}224}&\pgfutilensuremath {76{,}872}&\pgfutilensuremath {7.9\cdot 10^{-2}}&\pgfutilensuremath {5.0\cdot 10^{-1}}&\pgfutilensuremath {1.4\cdot 10^{-4}}\\%
GAUGE&\pgfutilensuremath {17{,}432}&\pgfutilensuremath {8{,}374}&\pgfutilensuremath {1.9\cdot 10^{-2}}&\pgfutilensuremath {5.4\cdot 10^{-1}}&\pgfutilensuremath {3.8\cdot 10^{-4}}\\%
GAUGE-feas&\pgfutilensuremath {4{,}128}&\pgfutilensuremath {2{,}062}&\pgfutilensuremath {8.4\cdot 10^{-2}}&\pgfutilensuremath {5.5\cdot 10^{-1}}&\pgfutilensuremath {4.0\cdot 10^{-4}}\\\bottomrule %
\end {tabular}%

\end{table}

\section{Extensions} \label{sec:extensions}

The problems~\eqref{eq:primal-probs} that we have considered so far
are stated in their simplest form. General semidefinite optimization
problems with nonnegative optimal value, and reweighted formulations
for rank minimization, as introduced by \cite{MohanFazel:2010} and
\cite{Candes:2013}, are also useful and can be accommodated by our
approach.

In the context of rank minimization over the PSD cone, an approximate
minimum-rank solution $\Xhat$ (e.g., computed via trace minimization)
might be used to obtain an even better approximation by using the
weighted objective $\ip{C}{X}$, where $C:=(\delta I+\Xhat)\inv$ and
$\delta$ is a small positive parameter. We might reasonably expect
that the objective value at a minimizer of this objective more closely
matches the rank function.  \cite{Candes:2013} show that such an
iteratively reweighted sequence of trace minimization problems can
improve the range of signals recoverable using PhaseLift.  Each
problem in that sequence uses the previous solution $\Xhat$ to derive
a weighting matrix $C=(\delta I+\Zhat\Zhat^*)\inv$ for the next
problem.  The inverse of the matrix $C$ is a low-rank update
$\Zhat\Zhat^*\approx\Xhat$ to a small regularizing multiple $\delta$
of the identity matrix.  This idea generalizes that of reweighting the
$1$-norm for cardinality minimization problems in compressed sensing,
where the number of nonzero entries of a vector $x$ is approximated by
$\sum_i\abs{x_i}/(\abs{\xhat_i}+\delta)$ for small $\delta$ and an
available approximation $\xhat$ (e.g., computed via $1$-norm
minimization).

In the next sections we derive the corresponding gauge duals for the weighted
formulations of both trace minimization in the PSD cone and nuclear-norm minimization.

\subsection{Nonnegative semidefinite optimization} \label{ssec:wtm}

Consider the semidefinite optimization problem
\begin{equation}
  \label{eq:wtm}
  \minimize{X\in\Hscr^n}
  \quad \ip{C}{X}
  \quad\st\quad
    \norm{b-\Ascr X}\le\epsilon,\ X\succeq0,
\end{equation}
where $C\succ0$.  
Define the maps
\[
  \Cscr(\cdot):=C^{-\frac{1}{2}}(\cdot)C^{-\frac{1}{2}}
  \text{and} \Ascr_C:=\Ascr\circ\Cscr^{-1}.
\]
It is evident that $X\succeq0$ if and only if $\Cscr(X)\succeq0$, and
so the SDP problem can be stated equivalently as
\begin{equation*}
  \minimize{X\in\Hscr^n}
  \quad \trace\, \Cscr(X)
  \quad\st\quad
    \norm{b-\Ascr_C(\Cscr(X))}\le\epsilon,\
    \Cscr(X)\succeq0.
\end{equation*}
Because $\Cscr$ is a bijection, we can optimize over
$\Xhat:=\Cscr(X)$ instead of $X$:
\begin{equation}\label{eq:wtmhat}
  \minimize{\Xhat\in\Hscr^n}
  \quad \trace\,\Xhat
  \quad\st\quad
    \norm{b-\Ascr_C\Xhat}\le\epsilon,\ \Xhat\succeq0.
\end{equation}
This clearly falls within the structure of~\eqref{eq:sdp-primal}, and
has the corresponding gauge dual
\begin{equation}\label{eq:wtmhatgd}
  \minimize{y\in\Real^m}
  \quad[\lambda_1(\Ascr_C^*y)]_+
  \quad\st\quad
    \ip b y-\epsilon\norm{y}_*\ge1.
\end{equation}
Observe that
$\lambda_1(\Ascr_C^*y)=\lambda_1(C^{-\frac{1}{2}}(\Ascr^*y)C^{-\frac{1}{2}})=\lambda_1(\Ascr^*y,C).$
Then
\begin{equation}
  \label{eq:wtmgd}
  \minimize{y\in\Real^m}
  \quad [\lambda_1(\Ascr^*y,C)]_+
  \quad\st\quad
    \ip b y - \epsilon\norm{y}_*\ge1.
\end{equation}

This shows that the introduction of a weighting matrix $C$ that is not a
simple multiple of the identity leads to a dual gauge problem
involving the minimization of the rightmost \emph{generalized}
eigenvalue of $\Ascr^*y$ with respect to that weight. Now that we have
a formulation for the gauge dual problem, we focus on how a primal
solution to the original weighted trace minimization can be computed
given a dual minimizer. This extends Corollary~\ref{corl:pfd}.
\begin{corollary}\label{corl:wtmpfd}
  Suppose that problem~\eqref{eq:wtm} is feasible and
  $0\le\epsilon<\norm{b}.$ Let $y\in\Real^m$ be an arbitrary optimal
  solution for the dual gauge~\eqref{eq:wtmgd}, $r_1\in\{1,\ldots,n\}$
  be the multiplicity of $\lambda_1(\Ascr^*y,C)$, and
  $U_1\in\Complex^{n\times r_1}$ be the matrix formed by the first
  $r_1$ generalized eigenvectors of $\Ascr^*y$ with respect to
  $C.$ Then $X\in\Hscr^n$ is a solution for the primal
  problem~\eqref{eq:wtm} if and only if there exists
  $S\succeq0$ such that
  \[
    X=U_1SU_1^* \text{and} (b-\Ascr
    X)\in\epsilon\partial\norm{\cdot}_*(y).
  \]

\end{corollary}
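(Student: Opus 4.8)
The plan is to obtain this result as a corollary of Corollary~\ref{corl:pfd} applied to the equivalent trace-minimization problem~\eqref{eq:wtmhat}, and then to pull the characterization back through the congruence that relates~\eqref{eq:wtm} and~\eqref{eq:wtmhat}. First I would record that the substitution used to pass from~\eqref{eq:wtm} to~\eqref{eq:wtmhat} is a bijection on $\Hscr^n$ that preserves the positive semidefinite cone, maps feasible points to feasible points with identical objective values, and leaves the residual unchanged in the sense that $b-\Ascr X=b-\Ascr_C\Xhat$. Consequently, since~\eqref{eq:wtm} is feasible and $0\le\epsilon<\norm{b}$, the transformed problem~\eqref{eq:wtmhat} is an instance of~\eqref{eq:sdp-primal} for the operator $\Ascr_C$ that satisfies the hypotheses of Corollary~\ref{corl:pfd}, and~\eqref{eq:wtmgd} is its gauge dual.

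Next I would invoke Corollary~\ref{corl:pfd} verbatim for~\eqref{eq:wtmhat}. Using the identity $\lambda_1(\Ascr_C^*y)=\lambda_1(\Ascr^*y,C)$ already established, the multiplicity of the rightmost eigenvalue of $\Ascr_C^*y$ coincides with the prescribed $r_1$, and the corollary asserts that $\Xhat$ solves~\eqref{eq:wtmhat} if and only if $\Xhat=\Uhat_1 S\Uhat_1^*$ for some $S\succeq0$ with $(b-\Ascr_C\Xhat)\in\epsilon\partial\norm{\cdot}_*(y)$, where $\Uhat_1$ is the matrix of the first $r_1$ eigenvectors of $\Ascr_C^*y$.

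The substantive step is to translate the eigenvector data back to the pencil $(\Ascr^*y,C)$. Writing $\Ascr_C^*y=C^{-1/2}(\Ascr^*y)C^{-1/2}$, a short computation shows that $v$ is an eigenvector of $\Ascr_C^*y$ for eigenvalue $\lambda$ exactly when $u:=C^{-1/2}v$ satisfies $(\Ascr^*y)u=\lambda C u$, i.e.\ $u$ is a generalized eigenvector of $(\Ascr^*y,C)$ for the same $\lambda$. Hence the columns of $U_1:=C^{-1/2}\Uhat_1$ are precisely the first $r_1$ generalized eigenvectors named in the statement. Undoing the congruence via $X=C^{-1/2}\Xhat C^{-1/2}$ (the transformation that makes $\trace\Xhat=\ip{C}{X}$) then gives
\[
  X = C^{-1/2}\,\Xhat\,C^{-1/2}
    = (C^{-1/2}\Uhat_1)\,S\,(C^{-1/2}\Uhat_1)^*
    = U_1 S U_1^*
\]
with the identical factor $S\succeq0$, while the residual condition carries over unchanged because $\Ascr_C\Xhat=\Ascr X$. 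Since each manipulation is reversible, this yields the stated equivalence in both directions.

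I expect the only real obstacle to be bookkeeping rather than conceptual: verifying that the congruence $X\mapsto\Xhat$ sends the factored form $\Uhat_1 S\Uhat_1^*$ to $U_1 S U_1^*$ with the same middle factor $S$, and that the $C^{-1/2}$ map identifies the ordered generalized eigenvectors of $(\Ascr^*y,C)$ (with the correct multiplicity $r_1$ and ordering) with the ordinary eigenvectors of $\Ascr_C^*y$. One should also confirm explicitly that the subdifferential membership $(b-\Ascr X)\in\epsilon\partial\norm{\cdot}_*(y)$ is invariant under the substitution, which follows immediately once $\Ascr X=\Ascr_C\Xhat$ is in hand.
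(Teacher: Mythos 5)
Your proposal is correct and follows essentially the same route as the paper's own proof: invoke Corollary~\ref{corl:pfd} on the transformed problem~\eqref{eq:wtmhat} (using that~\eqref{eq:wtmgd} and~\eqref{eq:wtmhatgd} are the same problem via $\lambda_1(\Ascr_C^*y)=\lambda_1(\Ascr^*y,C)$), then pull the factored characterization back through the congruence, obtaining $X=U_1SU_1^*$ with $U_1=C^{-1/2}\Uhat_1$ and the unchanged residual condition. You are in fact somewhat more careful than the paper: you spell out the eigenvector correspondence $v\mapsto C^{-1/2}v$ between $\Ascr_C^*y$ and the pencil $(\Ascr^*y,C)$, and your orientation of the congruence (fixed by requiring $\trace\Xhat=\ip{C}{X}$) resolves a sign inconsistency in the paper's stated definition of $\Cscr$.
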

\begin{proof}
  A solution for~\eqref{eq:wtmgd} is clearly a solution
  for~\eqref{eq:wtmhatgd}. We may thus invoke
  Corollary~\ref{corl:pfd} and assert that $\Xhat\in\Hscr^n$ is a
  solution for~\eqref{eq:wtmhat} if and only if there is $S\succeq0$ such that
  $\Xhat=\Uhat_1S\Uhat_1^*$ and
  $(b-\Ascr_C\Xhat)\in\epsilon\partial\norm{\cdot}_*(y),$ where
  $\Uhat_1\in\Complex^{n\times r_1}$ is a matrix formed by the first
  $r_1$ eigenvectors of
  $\Ascr_C^*y=C^{-\frac{1}{2}}(\Ascr^*y)C^{-\frac{1}{2}}.$ From the
  structure of $\Cscr,$ we have that $X$ is a solution
  to~\eqref{eq:wtm} if and only if $X=\Cscr(\Xhat).$ Thus,
  $X=C^{-\frac12}\Uhat_1S\Uhat_1^*C^{-\frac12}=U_1SU_1^*,$ where
  $U_1:=C^{-{\frac12}}\Uhat_1$ corresponds to the first $r_1$
  generalized eigenvectors of $\Ascr^*y$ with respect to $C.$
\end{proof}

Once again, this provides us with a way to recover a solution to the
weighted trace minimization problem by computing a solution to the
gauge dual problem (now involving the rightmost generalized
eigenvalue) and then solving a problem of potentially much reduced
dimensionality.

\subsection{Weighted affine nuclear-norm optimization} \label{ssec:wnnm}

We can similarly extend the reweighted extension to the asymmetric
case~\eqref{eq:nuc-primal}.  Let $C_1\in\Hscr^{n_1}$ and
$C_2\in\Hscr^{n_2}$ be invertible. The weighted nuclear-norm
minimization problem becomes
\begin{equation}
  \label{eq:wnn}
  \minimize{X\in\Complex^{n_1\times n_2}}
  \quad \norm{C_1 X C_2^{*}}_1
  \quad\st\quad
    \norm{b-\Ascr X}\le\epsilon.
\end{equation}
Define the weighted quantities
\[
  \Cscr(\cdot)=C_1\inv(\cdot)C_2^{-*}:\Complex^{n_1\times
  n_2}\to\Complex^{n_1\times n_2},
  \quad
  \Ascr_C=\Ascr\circ\Cscr,
  \text{and}
  \Xhat:=\Cscr(X).
\]
The weighted problem can then be stated equivalently as
\begin{equation*}
  \minimize{\Xhat\in\Complex^{n_1\times n_2}}
  \quad \norm{\Xhat}_1
  \quad\st\quad
    \norm{b-\Ascr_C\Xhat}\le\epsilon,
\end{equation*}
which, following the approach introduced in~\cite{Fazel:2002}, can be
embedded in a symmetric problem:
\begin{equation}\label{eq:nnmsdp}
\begin{aligned}
  &\minimize{\substack{\Uhat\in\Hscr^{n_1}\\\Vhat\in\Hscr^{n_2}\\\Xhat\in\Complex^{n_1\times
        n_2}}} \quad
  \left\langle\frac{1}{2}I,\begin{pmatrix}\Uhat&\Xhat\\\Xhat^*&\Vhat\end{pmatrix}\right\rangle
  \\&\;\;\st\quad
  \begin{pmatrix}\Uhat&\Xhat\\\Xhat^*&\Vhat\end{pmatrix}\succeq0
  \text{ and }
  \norm{b-\Ascr_C\Xhat}\le\epsilon.
\end{aligned}
\end{equation}
Define the measurement operator from
$\Mscr:\Hscr^{n_1+n_2}\to\Complex^m$ by the map
\begin{align*}
  \begin{pmatrix}\Uhat&\Xhat\\\Xhat^*&\Vhat\end{pmatrix}&\mapsto\Ascr_C\Xhat,
\end{align*}
and identify $\Complex^m$ with $\Real^{2m}$ as a real inner-product
space. The adjoint of the measurement operator is then given by
\[
  \Mscr^*y
  =\begin{pmatrix}
    0&\Ascr_C^*y \\(\Ascr_C^*y)^{*}&0
  \end{pmatrix},\]
where $\Ascr_C^*y=\frac{1}{2}\sum_{i=1}^m C_1^{-1}A_iC_2^{-*}y_i$.  We can now
state the gauge dual problem:
\begin{align} \label{eq:nnmsdpgd}
  &\minimize{y\in\Complex^m}
  \quad \left[\lambda_1(\Mscr^{*}y,\half I) \right]_+
  \quad\st\quad
    \mathfrak{R}\ip b y - \epsilon\norm{y}_*\ge1.
\end{align}
Observe the identity
\begin{align*}
  \lambda_{1}\left(\Mscr^{*}y,\half I\right)
   &=\ \lambda_{1}(2\Mscr^{*}y)
  \\&=\left[\lambda_1\begin{pmatrix}0&\sum_{i=1}^mC_1^{-1}A_iC_2^{-*}y_i\\(\sum_{i=1}^mC_1^{-1}A_iC_2^{-*}y_i)^*&0\end{pmatrix}\right]_+
  \\&=\left[\norm{C_1^{-1}(\Ascr^*y)C_2^{-*}}_\infty\right]_+
     =\norm{C_1^{-1}(\Ascr^*y)C_2^{-*}}_\infty.
\end{align*}
We can now deduce the simplified form for the gauge dual problem:
\begin{equation}
  \label{eq:wnngd}
  \minimize{y\in\Complex^m}
  \quad \norm{C_1^{-1}(\Ascr^*y)C_2^{-*}}_\infty
  \quad\st\quad
    \mathfrak{R}\ip b y - \epsilon\norm{y}_*\ge1.
\end{equation}

This weighted gauge dual problem can be derived from first principles
using the tools from section~\ref{sec:gauge-duality} by observing that
the primal problem is already in standard gauge form. We chose this
approach, however, to make explicit the close connection between the
(weighted) nuclear-norm minimization problem and the (weighted)
trace-minimization problem described in section~\ref{ssec:wtm}.

The following result provides a way to characterize solutions of the
nuclear norm minimization problem when a solution to the dual gauge
problem is available.
\begin{corollary}\label{corl:nnmpfd}
  Suppose that problem~\eqref{eq:wnn} is feasible and
  $0\le\epsilon<\norm{b}$. Let $y\in\Complex^m$ be an arbitrary
  optimal solution for the dual gauge problem~\eqref{eq:wnngd},
  $r_1\in\{1,\ldots,n\}$ be the multiplicity of
  $\sigma_1(C_1^{-1}(\Ascr^*y)C_2^{-*}),$ $U_1\in\Complex^{n_1\times r_1}$ and
  $V_1\in\Complex^{n_2\times r_1}$ be the matrices formed by the first
  $r_1$ left  and right singular-vectors of $C_1^{-1}(\Ascr^*y)C_2^{-*},$
  respectively. Then $X\in\Complex^{n_1\times n_2}$ is a solution for
  the primal problem~\eqref{eq:wnn} if and only if there exists
  $S\succeq0$ such that $X=(C_1^{-1}U_1)S(C_2^{-1}V_1)^*$ and
  $(b-\Ascr X)\in\epsilon\partial\norm{\cdot}_*(y).$
\end{corollary}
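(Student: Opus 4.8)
The plan is to reduce problem~\eqref{eq:wnn} to its symmetric embedding~\eqref{eq:nnmsdp} and then to apply the characterization already obtained for weighted trace minimization, exactly paralleling the proof of Corollary~\ref{corl:wtmpfd}; the one genuinely new step is translating the eigenvectors of the Hermitian dilation $\Mscr^{*}y$ into the singular vectors of $\Ascr_C^{*}y=\tfrac12 C_1^{-1}(\Ascr^{*}y)C_2^{-*}$.

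First I would note that, after the substitution $\Xhat=\Cscr(X)$, the embedded SDP~\eqref{eq:nnmsdp} is an instance of the weighted trace problem~\eqref{eq:wtm} with measurement operator $\Mscr$ and weight $C=\tfrac12 I$, and that its gauge dual is precisely~\eqref{eq:nnmsdpgd}. The identity chain already derived shows that the objective of~\eqref{eq:nnmsdpgd} equals $\norm{C_1^{-1}(\Ascr^{*}y)C_2^{-*}}_\infty$, the objective of~\eqref{eq:wnngd}; since the feasible sets coincide, an optimal $y$ for~\eqref{eq:wnngd} is optimal for~\eqref{eq:nnmsdpgd}. Corollary~\ref{corl:wtmpfd} then applies to the embedded problem and characterizes its block solution $\Zhat=\bigl(\begin{smallmatrix}\Uhat&\Xhat\\\Xhat^{*}&\Vhat\end{smallmatrix}\bigr)$: it is optimal if and only if there is $S\succeq0$ with $\Zhat=P_1SP_1^{*}$ and $(b-\Mscr\Zhat)\in\epsilon\partial\norm{\cdot}_{*}(y)$, where $P_1$ collects the leading generalized eigenvectors of $\Mscr^{*}y$ with respect to $\tfrac12 I$. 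Because $\tfrac12 I$ is a positive multiple of the identity, these coincide, up to a scalar that can be absorbed into $S$, with the eigenvectors of $\Mscr^{*}y$ associated with its largest eigenvalue.

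The crux is the dilation step. Writing a reduced SVD $\Ascr_C^{*}y=\sum_i\sigma_i\uhat_i\vhat_i^{*}$, the Hermitian dilation $\Mscr^{*}y=\bigl(\begin{smallmatrix}0&\Ascr_C^{*}y\\(\Ascr_C^{*}y)^{*}&0\end{smallmatrix}\bigr)$ has eigenpairs $\bigl(\pm\sigma_i,\ \tfrac1{\sqrt2}\bigl(\begin{smallmatrix}\uhat_i\\\pm\vhat_i\end{smallmatrix}\bigr)\bigr)$. Positivity of the optimal dual value guarantees $\sigma_1>0$, so the largest eigenvalue of $\Mscr^{*}y$ is $\sigma_1$ with the same multiplicity $r_1$ as the leading singular value, and its eigenspace is spanned by the columns of $\tfrac1{\sqrt2}\bigl(\begin{smallmatrix}\Uhat_1\\\Vhat_1\end{smallmatrix}\bigr)$, where $\Uhat_1,\Vhat_1$ are the first $r_1$ left and right singular vectors of $\Ascr_C^{*}y$ and hence also of $C_1^{-1}(\Ascr^{*}y)C_2^{-*}$ (the two matrices differ only by the factor $\tfrac12$). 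Substituting $P_1=\tfrac1{\sqrt2}\bigl(\begin{smallmatrix}\Uhat_1\\\Vhat_1\end{smallmatrix}\bigr)$ into $\Zhat=P_1SP_1^{*}$ and reading off the off-diagonal block yields $\Xhat=\tfrac12\Uhat_1 S\Vhat_1^{*}$; absorbing the factor into $S$ gives $\Xhat=U_1SV_1^{*}$ with $S\succeq0$.

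Finally I would undo the change of variables as in Corollary~\ref{corl:wtmpfd}: from $X=\Cscr(\Xhat)=C_1^{-1}\Xhat C_2^{-*}$ I obtain $X=(C_1^{-1}U_1)S(C_2^{-1}V_1)^{*}$, the asserted form, while the residual condition carries over verbatim because $\Mscr\Zhat=\Ascr_C\Xhat=\Ascr(\Cscr(\Xhat))=\Ascr X$, so $(b-\Mscr\Zhat)=(b-\Ascr X)$. Both directions of the equivalence then follow from the two directions of Corollary~\ref{corl:wtmpfd}, together with the exactness of the semidefinite reformulation of the nuclear norm used in~\eqref{eq:nnmsdp}, which ensures that the $X$-block of an optimal $\Zhat$ is optimal for the (unweighted) nuclear-norm problem and conversely. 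I expect the only delicate points to be the scalar and multiplicity bookkeeping in the dilation step---confirming that $\sigma_1>0$ isolates the leading eigenspace and that its multiplicity is exactly $r_1$---since everything else is a transcription of the weighted trace argument.
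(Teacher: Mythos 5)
Your proposal is correct and follows essentially the same route as the paper: reduce to the symmetric embedding~\eqref{eq:nnmsdp}, invoke the earlier primal-recovery corollary, translate the leading eigenspace of the Hermitian dilation $\Mscr^{*}y$ into the leading left/right singular spaces of $C_1^{-1}(\Ascr^{*}y)C_2^{-*}$, and undo the change of variables. You in fact supply details the paper glosses over---the explicit eigenstructure of the dilation, the role of $\sigma_1>0$ in isolating the leading eigenspace, and the factor-of-two bookkeeping---with the only cosmetic difference being that you route through Corollary~\ref{corl:wtmpfd} with weight $\tfrac12 I$, whereas the paper invokes Corollary~\ref{corl:pfd} directly.
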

\begin{proof}
  A solution for~\eqref{eq:wnngd} is clearly a solution
  for~\eqref{eq:nnmsdpgd}; this way we invoke
  Corollary~\ref{corl:pfd} and have that
  $(\Uhat,\Vhat,\Xhat)\in\Hscr^{n_1}\times\Hscr^{n_2}\times\Complex^{n_1\times
    n_2}$
  induce a solution for~\eqref{eq:nnmsdp} if and only if there is $S\succeq0$
  such that $\Xhat=\Uhat_1S\Vhat_1^*$ and
  $(b-\Ascr_C\Xhat)\in\epsilon\partial\norm{\cdot}_*(y),$ where
  $\Uhat_1\in\Complex^{n_1\times r_1}$ and
  $\Vhat_1\in\Complex^{n_2\times r_1}$ are matrices formed by the
  first $r_1$ left and right singular-vectors of
  $\Ascr_C^*y=C_1^{-1}(\Ascr^*y)C_2^{-*}.$ From the structure of $\Cscr,$ we
  have that $X$ is a solution to~\eqref{eq:wnn} if and only if $X=\Cscr(\Xhat).$
  This way, $X=(C_1^{-1}\Uhat_1)S(C_2^{-1}\Vhat_1)^*.$
\end{proof}

\section{Conclusions} \label{sec:conclusions}

The phase retrieval and blind deconvolution applications are examples
of convex relaxations of non-convex problems that give rise to large
spectral optimization problems with strong statistical guarantees for
correctly reconstructing certain signals. One of the criticisms that
have been leveled at these relaxation approaches is that they lead to
problems that are too difficult to be useful in practice. This has led
to work on non-convex recovery algorithms that may not have as-strong
statistical recovery guarantees, but are nonetheless effective in
practice; \cite{NIPS2013_5041,7029630,2015arXiv150607868W}.  Our
motivation is to determine whether it is possible to develop convex
optimization algorithms that are as efficient as non-convex
approaches.  The numerical experiments on these problems suggest that
the gauge-dual approach may prove effective. Indeed, other convex
optimization algorithms may be possible, and clearly the key to their
success will be to leverage the special structure of these problems.

A theoretical question we have not addressed is to delineate
conditions under which dual attainment will hold. In particular, the
conclusion~\eqref{eq:4} of Theorem~\ref{prop:strong-duality} is
asymmetric: we can assert that a primal solution exists that attains
the primal optimal value (because the Lagrange dual is strictly
feasible), but we cannot assert that a dual solution exists that
attains the dual optimal value. A related theoretical question is to
understand the relationship between the quality of suboptimal dual
solutions, and the quality of the primal estimate obtained by the
primal recovery procedure.

In our experiments, we have observed that the rightmost eigenvalue of
$\Ascr^{*}y$ remains fairly well separated from the others across
iterations. This seems to contribute to the overall effectiveness of
the dual-descent method.  Is there a special property of these
problems or of the algorithm that encourages this separation property?
It seems likely that there are solutions $y$ at which the objective is
not differentiable, and in that case, we wonder if there are
algorithmic devices that could be used to avoid such points.

The dual-descent method that we use to solve the dual subproblem
(cf.\@ section~\ref{sec:dual-descent}) is only one possible algorithm among
many. Other more specialized methods, such as the spectral bundle
method of \citet{HelmbergRendl:2000}, its second-order variant
\citep{helmberg2014spectral}, or the stochastic-gradient method of
\citet{doi:10.1137/12088728X}, may prove effective alternatives.

We have found it convenient to embed the nuclear-norm minimization
problem~\eqref{eq:nuc-primal} in the SDP formulation
\eqref{eq:sdp-primal} because it allows us to use the same solver for
both problems. Further efficiencies, however, may be gained by
implementing a solver that applied directly to the corresponding gauge
dual
\begin{equation*}
  \minimize{y\in\Complex^m}
  \quad \norm{\Ascr^*y}_\infty
  \quad\st\quad
    \mathfrak{R}\ip b y - \epsilon\norm{y}_*\ge1.
\end{equation*}
This would require an iterative solver for evaluating leading singular
values and singular vectors of the asymmetric operator
$\Ascr^{*}y$, such as PROPACK \citep{Lars:2001}.

\section*{Acknowledgments}

We extend sincere thanks to our colleague Nathan Krislock, who was
involved in an earlier incarnation of this project, and to our
colleague Ting Kei Pong, who was our partner in establishing the
crucial gauge duality theory in \cite*{FriedlanderMacedoPong:2014}. We
are also grateful to Xiaodong Li and Mahdi Soltanolkotabi for help
with using their \texttt{WFLOW} code. Finally, we wish to thank two
anonymous referees who provided a careful list of comments and
suggestions that helped to clarify our presentation.

\providecommand{\noopsort}[1]{}


\end{document}